\newtheorem{theo}{Theorem}
\newtheorem{lem}{Lemma}
\newtheorem{prop}{Proposition}
\newtheorem{rem}{Remark}
\newtheorem{con}{Conjecture}
\newcommand{\B}{\mathrm{bh}}
\author{}
\begin{document}

\begin{center}
{\bf\Large Algorithms for ball hulls and ball intersections in strictly convex  normed planes}\\[2ex]
by\\[2ex]
{\sc Pedro Mart\'{\i}n and Horst Martini }
\end{center}

\vspace*{3ex}

{\bf Abstract.}
{\small Extending results of Hershberger and Suri for the Euclidean plane, we show that
ball hulls and ball intersections of sets of $n$ points in strictly convex normed planes
can be constructed in $O(n \log n)$ time. In addition, we confirm that, like in the Euclidean subcase, the $2$-center problem
with constrained circles can be solved also for strictly convex normed planes in $O(n^2)$ time.
Some ideas for extending these results to more general types of normed planes are also presented.
}

\thanks{2000 {\it Mathematics Subject Classification}: 41A50, 46B20, 52A21, 52B55, 52C15, 68Q17.}

\thanks{{\it Key words and phrases}: ball hull, ball intersection, circumball, circumradius, Chebyshev set, Minkowski geometry, strictly convex normed plane, $2$-center problem}
\thanks{Research partially supported by MICINN (Spain) and FEDER (UE) grant MTM2008-05460,
and by Junta de Extremadura grant GR10060 (partially financed with FEDER).}

\title{}

\maketitle

\vspace{-5ex}

\section{Introduction}

The ball hull and the ball intersection of a given point set $K$ are common notions in
Banach-space theory; see, e.g., \cite{Bar-Pap}, \cite{Ma-Ma-Sp}, and \cite{T}. They denote intersections of congruent balls with
suitable radius which, in the first case, contain $K$ and, in the second one, have their centers in $K$.
Continuing algorithmical investigations of Hershberger and Suri (for the Euclidean subcase; see \cite{H-S}), we
present algorithmical approaches to the constructions of ball hulls and ball intersections of finite point sets $K$ in strictly
convex normed planes. Note that, although presenting only planar results, we stay with the notions of ball hull and ball intersection (instead of circular hull and circular
intersection) since they are common in this form. In other situations, we replace ``ball'' and ``sphere'' by \emph{disc} and \emph{circle}, respectively.
More precisely, we show that if $K$ consists of $n$ points, then the ball hull and the ball intersection of $K$ can be constructed in $O(n \log n)$ time.
For the case of ball hulls we additionally present  a second algorithm which is completely analogous to that from \cite{H-S}. We also discuss a further geometric question. The $2$-center problem asks for two closed discs  to cover $K$ (see \cite{Ag-Sh-SWe}, \cite{Ag-Pa-Av-Ri-Sh}, \cite{H-S-G}, \cite{H2}, and \cite{Sh}). Again generalizing results from \cite{H-S}, we show that the $2$-center problem with constrained center of suitably fixed radii can be solved in $O(n^2)$ time also if we extend it to strictly convex normed planes. In our final section, we present some results which can be taken as starting point for investigations of ball hulls in normed planes that no longer have to be strictly convex.

\medskip

Let $\mathbb{M}^d=(\mathbb{R}^d, \|\cdot\|)$ be a $d$-dimensional  \emph{normed} (or \emph{Minkowski}) \emph{space}. As well-known, the \emph{unit ball} $B$ of  $\mathbb{M}^d$ is   a compact, convex set with non-empty interior (i.e., a \emph{convex body}) centered at the \emph{origin} $o$. The \emph{boundary} of a closed set $A$ is denoted by $\partial A$, and  $\partial B$ is the \emph{unit sphere} of $\mathbb{M}^d$. Any homothetical copy $x+\lambda B$ of $B$  is called the  \emph{ball with center $x\in\mathbb{R}^d$ and radius} $\lambda > 0$ and  denoted by $B(x,\lambda)$; its boundary is the  \emph{sphere}  $S(x,\lambda).$  We use the usual abbreviation $\mathrm{conv}$ for \emph{convex hull}, and the \emph{line segment} connecting  the different points $p$ and $q$ is denoted by $\overline{pq}$, its affine hull is the  \emph{line}  $\langle p, q\rangle$. The vector $p-q$ is denoted by $\overrightarrow{qp}$.

Let $p$ and $q$  be two points of  the circle $S(x,\lambda)$ in $\mathbb{M}^2$. The \textit{minimal circular arc of $B(x,\lambda)$ meeting $p$ and $q$ } is the piece of $S(x,\lambda)$  with endpoints $p$ and $q$ which lies in the half-plane bounded by the line $\langle p, q\rangle$ and does not contain the center $x$. If $p$ and $q$ are opposite in $S(x,\lambda)$, then the two half-circles with endpoints $p$ and $q$ are minimal circular arcs of $S(x,\lambda)$ meeting $p$ and $q$. We denote a minimal circular arc meeting $p$ and $q$ by  $\widehat{pq}$.

Given a set $K$ of points in $\mathbb{M}^2$ and $\lambda>0$, the \emph{$\lambda$-ball hull} $\operatorname{bh}(K, \lambda)$ \emph{of} $K$ is defined as the intersection of all  balls of radius $\lambda$ that contain $K$:
$$
\operatorname{bh}(K, \lambda)=\bigcap_{K\subset B(x,\lambda)}B(x, \lambda).
$$

The \emph{$\lambda$-ball intersection} $\operatorname{bi}(K, \lambda)$ \emph{of} $K$ is the intersection of all  balls of radius $\lambda$ whose centers are from $K$:
$$
\operatorname{bi}(K, \lambda)=\bigcap_{x\in K}B(x,\lambda).
$$

 Of course,   these notions make only sense if $\operatorname{bi}(K, \lambda)\neq\emptyset$ and $\operatorname{bh}(K, \lambda)\neq\emptyset$. It is clear that $\operatorname{bh}(K, \lambda)\neq\emptyset$ if and only if $\lambda\geq\lambda_K$, where $\lambda_K$ is the smallest number such that $K$ is contained in a translate of $\lambda_K B$. Such a translate is called a \emph{minimal enclosing ball} (or \emph{circumball) of $K$}, and $\lambda_K$ is said to be the \emph{minimal enclosing radius } (or  \emph{circumradius } or  \emph{Chebyshev radius}) of $K$. Clearly, we have
 \begin{equation}K_1\subseteq K_2\;\Longrightarrow \lambda_{K_1}\leq \lambda_{K_2}.\label{11} \end{equation}
 In the Euclidean subcase the minimal enclosing ball of a bounded set is always unique, but this is no longer true for an arbitrary norm. It is  easy to check that
\begin{equation}\begin{split}\{x\in \mathbb{M}^d: x\;\text{is the center of a minimal enclosing disc of}\; K\}\\=\operatorname{bi}(K, \lambda_K),\end{split}\label{19}\end{equation} yielding that
  $\operatorname{bi}(K, \lambda)\neq\emptyset$ if and only if $\lambda\geq\lambda_K$. The set of centers of minimal enclosing balls of $K$ is called the \emph{Chebyshev set of $K$}.  Note that, in contrast to the Euclidean situation, in general normed spaces the Chebyshev set of a bounded set does not  necessarily belong  to the convex hull of this set (see \cite{Ma-Ma-Sp} for some examples).

For  a bounded compact set $K$ in $\mathbb{M}^d$  denote  by $\operatorname{diam} (K):=\max\{\|x-y\|: x, y\in K\}$  the \emph{diameter of } $K$.

 In what follows, when we speak about the $\lambda$-ball intersection or $\lambda$-ball hull of a set $K$, we always mean that $\lambda\geq\lambda_K$. It is easy to check that
\begin{equation}\label{12}\lambda_K\leq \mathrm{diam} (K)\leq 2 \lambda_K;
\end{equation}
see also \cite{Bar-Pap}.\\[0.2cm]

Note that for $d=2$ and $K$ a finite set, the boundary structure of $\mathrm{bi}(K,\lambda)$ consists of circular arcs of radius $\lambda$ with centers belonging to $K$. The following theorem  (see \cite{Ma-Ma-Sp}, \cite{Ma-Ma-Sp2}) describes the boundary structure of $\mathrm{bh}(K,\lambda)$.

\bigskip

\begin{theo}\label{theo1}
Let $K=\{p_1,p_2,\dots,p_n\}$ be a finite set in  a normed plane  $\mathbb{M}^2$,  and let $\lambda\geq \lambda_K$. We denote by  $\widehat{p_ip_j}$  a minimal circular  arc of radius $\lambda$ meeting $p_i$ and $p_j$. Let $\mathcal{H}$ be the set of all discs of radius $\lambda$ such that their boundary contains a circular  arc meeting points from $K$. If the plane $\mathbb{M}^2$ is strictly convex or $\lambda \geq \mathrm{diam}(K)$, then
$$
\operatorname{bh}(K,\lambda)=\bigcap_{K\subset B(x,\lambda)\in \mathcal{H}}B(x,\lambda)=\operatorname{conv}(\bigcup_{i,j=1}^n \widehat{p_ip_j}).
$$
\end{theo}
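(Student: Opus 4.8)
The plan is to prove the cyclic chain of inclusions
$$\operatorname{conv}\Big(\bigcup_{i,j=1}^n\widehat{p_ip_j}\Big)\ \subseteq\ \operatorname{bh}(K,\lambda)\ \subseteq\ \bigcap_{K\subset B(x,\lambda)\in\mathcal H}B(x,\lambda)\ \subseteq\ \operatorname{conv}\Big(\bigcup_{i,j=1}^n\widehat{p_ip_j}\Big),$$
whence all three sets coincide. The organizing observation is a duality: since $p_i\in B(x,\lambda)\Leftrightarrow x\in B(p_i,\lambda)$, the centers of the radius-$\lambda$ balls containing $K$ are exactly the points of $\operatorname{bi}(K,\lambda)$, so that $w\in\operatorname{bh}(K,\lambda)$ if and only if $\operatorname{bi}(K,\lambda)\subseteq B(w,\lambda)$. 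This lets me pass freely between the ball hull and the easier-to-visualize ball intersection. The middle inclusion is immediate: the balls of $\mathcal H$ containing $K$ form a subfamily of all radius-$\lambda$ balls containing $K$, and the intersection over the smaller family is the larger set.

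For the first inclusion I use that $\operatorname{bh}(K,\lambda)$ is convex, so it suffices to show each minimal arc $\widehat{p_ip_j}$ lies in every ball $B(x,\lambda)\supseteq K$. As such a ball contains $p_i$ and $p_j$, everything reduces to the two-point statement: the minimal arc of radius $\lambda$ through two points $p,q$ is contained in every radius-$\lambda$ disc containing $p$ and $q$. I would prove this by noting that, in a strictly convex plane, $S(p,\lambda)\cap S(q,\lambda)$ consists of exactly two points $y,y'$, namely the centers of the two radius-$\lambda$ circles through $p$ and $q$; consequently $\operatorname{bh}(\{p,q\},\lambda)=B(y,\lambda)\cap B(y',\lambda)$ is the lens bounded by the two minimal arcs, the arc $\widehat{pq}$ being exactly the portion of $S(y,\lambda)$ lying in $B(y',\lambda)$. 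Since by definition the two-point ball hull sits inside every enclosing disc, the arc does too. The non-strictly-convex case is recovered from the hypothesis $\lambda\ge\operatorname{diam}(K)$, which keeps the relevant chords short relative to $\lambda$ and admits a separate direct argument.

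For the last inclusion I would analyze the boundary of $C:=\operatorname{conv}(\bigcup_{i,j}\widehat{p_ip_j})$ through the same duality. A point $w$ lies on $\partial\operatorname{bh}(K,\lambda)$ exactly when the farthest point $u^{*}$ of the convex set $\operatorname{bi}(K,\lambda)$ from $w$ satisfies $\|w-u^{*}\|=\lambda$; the key claim is that on the boundary this farthest point is always a \emph{vertex} $v$ of $\operatorname{bi}(K,\lambda)$, i.e.\ a point with $\|v-p_i\|=\|v-p_j\|=\lambda$ for two indices. Such a $v$ is the center of a radius-$\lambda$ circle through $p_i,p_j$ that, being a point of $\operatorname{bi}(K,\lambda)$, contains all of $K$; hence $B(v,\lambda)\in\mathcal H$, and the corresponding boundary piece of the hull is precisely the minimal arc $\widehat{p_ip_j}$. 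Thus $\partial C$ is a finite cyclic chain of minimal arcs, each carried by an enclosing disc of $\mathcal H$, with strict convexity excluding straight bridges between consecutive arcs. Writing $C$ as the intersection of these finitely many discs exhibits it as the intersection over a subfamily of $\{K\subset B\in\mathcal H\}$, which yields the inclusion.

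I expect the main obstacle to be the two-point arc-containment lemma together with the claim that the binding farthest point on $\partial\operatorname{bh}(K,\lambda)$ is always a vertex of $\operatorname{bi}(K,\lambda)$. These are precisely the points where strict convexity is indispensable: a sphere carrying a segment would let a minimal arc escape an enclosing disc, or let a whole boundary arc of $\operatorname{bi}(K,\lambda)$ bind simultaneously, destroying the finite dual correspondence between vertices of $\operatorname{bi}$ and arcs of $\operatorname{bh}$. Both facts must therefore be argued afresh, via the diameter hypothesis $\lambda\ge\operatorname{diam}(K)$, once strict convexity is dropped.
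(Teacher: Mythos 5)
Your overall architecture is sound --- the cyclic chain of inclusions, the duality $p_i\in B(x,\lambda)\Leftrightarrow x\in B(p_i,\lambda)$ (so that the centers of enclosing discs are exactly $\operatorname{bi}(K,\lambda)$), and the observation that the middle inclusion is trivial are all correct. Note first, though, that the paper itself does not prove Theorem \ref{theo1}: it imports it from \cite{Ma-Ma-Sp} and \cite{Ma-Ma-Sp2}, and what it records of those proofs are precisely the ingredients Lemma \ref{twocircles} (Gr\"unbaum--Banasiak), Lemma \ref{3.0}, Lemma \ref{4.0}, and Theorem \ref{bi-bh}. Measured against these, your proposal has a genuine gap exactly where the geometric content lives. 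In your first inclusion, the step ``$S(p,\lambda)\cap S(q,\lambda)=\{y,y'\}$, \emph{consequently} $\operatorname{bh}(\{p,q\},\lambda)=B(y,\lambda)\cap B(y',\lambda)$'' is circular: the inclusion $\operatorname{bh}(\{p,q\},\lambda)\subseteq B(y,\lambda)\cap B(y',\lambda)$ is trivial, but the reverse inclusion asserts that \emph{every} disc of radius $\lambda$ containing $p$ and $q$ contains the whole lens, i.e.\ contains the minimal arcs --- which is the very two-point lemma you set out to prove. Knowing that only two circles of radius $\lambda$ pass through $p$ and $q$ does not by itself prevent some enclosing disc (with $p$ or $q$ in its interior) from cutting into the lens; ruling that out is the content of Lemma \ref{twocircles} and Lemma \ref{3.0}(2), and it must be invoked or reproved, it does not follow formally from the count of circles.

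The last inclusion likewise hangs on an unproved ``key claim'': that the farthest point of $\operatorname{bi}(K,\lambda)$ from any $w\in\partial\operatorname{bh}(K,\lambda)$ is a \emph{vertex} of $\operatorname{bi}(K,\lambda)$, so that $\partial\operatorname{bh}(K,\lambda)$ decomposes into minimal arcs centered at such vertices. This is essentially Theorem \ref{bi-bh} (equivalently, $\operatorname{bh}(K,\lambda)=\operatorname{bi}(K',\lambda)$ of Theorem \ref{bi-bh2}), again a substantive cited result, and the obvious shortcut ``a farthest point of a convex set is an extreme point'' does not close it: in a strictly convex plane every relative-interior point of every arc of $\partial\operatorname{bi}(K,\lambda)$ is extreme, so excluding non-vertex contact requires a tangency argument (two congruent circles touching without crossing, with both discs on the same side of a common supporting line, must have the same center). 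Finally, your closing sentence defers both facts to the non-strictly-convex setting, as if the strictly convex case were already complete --- it is not --- and the branch $\lambda\geq\operatorname{diam}(K)$ without strict convexity, which is part of the statement, is dismissed in one sentence even though there the spheres may contain segments and the relevant arc-containment is exactly what Lemma \ref{4.0} and Section \ref{ball hull structure in a non strictly} are about. In short: correct skeleton, but both load-bearing steps are asserted rather than proved, one of them circularly.
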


\bigskip

There exists a strong relationship between the ball hull and the ball intersection of a set $K$ as the following result shows (see \cite{Ma-Ma-Sp2}).
\begin{theo}\label{bi-bh}
 Let $K=\{p_1,p_2,\dots,p_n\}$ be a finite set in  a normed plane  $\mathbb{M}^2$ and $\lambda\geq \lambda_K$. If the plane $\mathbb{M}^2$ is strictly convex or $\lambda\geq \mathrm{diam}(K)$, then every arc of $\mathrm{bi}(K,\lambda)$ is generated by a vertex of $\mathrm{bh}(K,\lambda)$, and every vertex of $\mathrm{bi}(K,\lambda)$ is the center of an arc belonging to the boundary of $\mathrm{bh}(K,\lambda).$
\end{theo}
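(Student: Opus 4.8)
The plan is to exploit the duality between the two operations. The starting point is the elementary observation that a radius-$\lambda$ ball contains $K$ exactly when its center lies in $\mathrm{bi}(K,\lambda)$: indeed $K\subset B(x,\lambda)$ means $\|x-p_k\|\le\lambda$ for every $k$, i.e. $x\in\bigcap_k B(p_k,\lambda)=\mathrm{bi}(K,\lambda)$. Feeding this into the definition of the ball hull gives $\mathrm{bh}(K,\lambda)=\bigcap_{x\in\mathrm{bi}(K,\lambda)}B(x,\lambda)$, so that $\mathrm{bh}$ and $\mathrm{bi}$ are polar to one another under the map $S\mapsto\bigcap_{x\in S}B(x,\lambda)$. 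This already clarifies the correspondence: if $\widehat{p_ip_j}\subset S(c,\lambda)$ is a boundary arc of $\mathrm{bh}(K,\lambda)$, then its center $c$ satisfies $B(c,\lambda)\supset K$ (so $c\in\mathrm{bi}(K,\lambda)$) and $\|c-p_i\|=\|c-p_j\|=\lambda$, hence $c\in S(p_i,\lambda)\cap S(p_j,\lambda)\cap\partial\,\mathrm{bi}(K,\lambda)$, which forces $c$ to be a vertex of $\mathrm{bi}(K,\lambda)$. The two assertions of the theorem are the reverse implications, and I would prove them in the order below.

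For the second assertion, let $v$ be a vertex of $\mathrm{bi}(K,\lambda)$, where the arcs of $\mathrm{bi}$ carried by $S(p_i,\lambda)$ and $S(p_j,\lambda)$ meet. Since $v\in\mathrm{bi}(K,\lambda)$ we have $B(v,\lambda)\supset K$, whence $\mathrm{bh}(K,\lambda)\subset B(v,\lambda)$ and $p_i,p_j\in S(v,\lambda)\cap\mathrm{bh}(K,\lambda)$. It therefore suffices to show that the whole minimal arc $\widehat{p_ip_j}$ lying on $S(v,\lambda)$ is contained in $\mathrm{bh}(K,\lambda)$; being on $S(v,\lambda)$ while $\mathrm{bh}(K,\lambda)\subset B(v,\lambda)$, it is then a boundary arc of $\mathrm{bh}$ centered at $v$. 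By the polarity this containment is equivalent to $\mathrm{bi}(K,\lambda)\subset B(a,\lambda)$ for every $a\in\widehat{p_ip_j}$, i.e. to the statement that $v$ is a farthest point of $\mathrm{bi}(K,\lambda)$ from each such $a$. To verify it I would use the local structure at the vertex: near $v$ the set $\mathrm{bi}$ coincides with $B(p_i,\lambda)\cap B(p_j,\lambda)$, so its cone of supporting lines at $v$ is bounded by the tangents $\ell_i,\ell_j$ of $S(p_i,\lambda),S(p_j,\lambda)$ at $v$. The tangent of $S(a,\lambda)$ at $v$ is parallel to the tangent of $\partial B$ at the unit vector $(v-a)/\lambda$; as $a$ runs along $\widehat{p_ip_j}$ this unit vector runs monotonically along the sub-arc of $\partial B$ from $(v-p_i)/\lambda$ to $(v-p_j)/\lambda$, so by strict convexity the tangent rotates monotonically from $\ell_i$ to $\ell_j$ and stays a supporting line of $\mathrm{bi}$ at $v$. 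Together with $\mathrm{bi}(K,\lambda)\subset B(p_i,\lambda)\cap B(p_j,\lambda)$ and strict convexity this pins the farthest point of $\mathrm{bi}(K,\lambda)$ from $a$ at $v$, giving the desired containment.

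The first assertion then follows quickly. Let an arc of $\mathrm{bi}(K,\lambda)$ lie on $S(p_i,\lambda)$; being non-degenerate, it has two distinct endpoints $a,b$, which are vertices of $\mathrm{bi}(K,\lambda)$. By the second assertion $a$ and $b$ are centers of boundary arcs of $\mathrm{bh}(K,\lambda)$, and since $a,b\in S(p_i,\lambda)$ we have $p_i\in S(a,\lambda)\cap S(b,\lambda)$, so $p_i$ is an endpoint of both of these arcs. As $a\neq b$, the directions $(p_i-a)/\lambda$ and $(p_i-b)/\lambda$ differ, and strict convexity makes the two arcs have different tangents at $p_i$; hence $p_i$ is a genuine corner where two boundary arcs of $\mathrm{bh}$ meet, i.e. a vertex of $\mathrm{bh}(K,\lambda)$.

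The main obstacle is the containment $\mathrm{bi}(K,\lambda)\subset B(a,\lambda)$ for $a$ in the relative interior of the arc, since local support of $\mathrm{bi}$ by the line $\ell_a$ at $v$ does not by itself give containment in the ball $B(a,\lambda)$; one must genuinely control the farthest point of $\mathrm{bi}(K,\lambda)$ from $a$, and it is precisely here (as well as in the monotone rotation of tangents and in the corner argument for the first assertion) that strict convexity, equivalently the hypothesis of Theorem~\ref{theo1}, enters and rules out the degenerate configurations, non-unique minimal arcs, and flat boundary pieces that would otherwise break the bijection between arcs and vertices of the two bodies.
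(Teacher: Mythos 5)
Your overall architecture is reasonable, and for orientation note that the paper itself does not prove Theorem~\ref{bi-bh} at all: it imports it from \cite{Ma-Ma-Sp2}. Your duality $\mathrm{bh}(K,\lambda)=\bigcap_{x\in\mathrm{bi}(K,\lambda)}B(x,\lambda)$, proving the vertex-to-arc assertion first and then deducing the arc-to-vertex assertion from it, is a correct skeleton. The genuine gap is in the central step: you never prove that $\mathrm{bi}(K,\lambda)\subset B(a,\lambda)$ for every $a$ in the minimal arc $\widehat{p_ip_j}\subset S(v,\lambda)$. Your tangent-rotation argument yields at most that the tangent of $S(a,\lambda)$ at $v$ supports $\mathrm{bi}(K,\lambda)$ at $v$, and -- as you yourself concede in the final paragraph -- support by a line at one point does not give containment in a ball; the claim that strict convexity ``pins the farthest point of $\mathrm{bi}(K,\lambda)$ from $a$ at $v$'' is exactly the statement to be proved and is left unproven. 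The argument is also delicate on its own terms: strict convexity does not imply smoothness of $\partial B$, so ``the tangent'' at $v$ and its ``monotone rotation'' are not well defined without further work. Finally, the theorem's hypothesis also allows non-strictly-convex planes provided $\lambda\geq\mathrm{diam}(K)$; your argument invokes strict convexity throughout and says nothing about that case.

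The gap closes immediately with a tool already quoted in the paper, making the tangent machinery unnecessary. By symmetry of the norm, $\mathrm{bi}(K,\lambda)\subset B(a,\lambda)$ for all $a\in\widehat{p_ip_j}$ is equivalent to $\widehat{p_ip_j}\subset B(x,\lambda)$ for every $x\in\mathrm{bi}(K,\lambda)$. Fix such an $x$. Then $B(x,\lambda)\supset K$, so in particular $p_i,p_j\in B(x,\lambda)$, and $\widehat{p_ip_j}$ is a minimal circular arc of radius $\lambda$ meeting $p_i$ and $p_j$; by Lemma~\ref{3.0}(2) (equivalently Lemma~\ref{4.0}(2)--(3), both quoted from \cite{Ma-Ma-Sp2} and resting on the Gr\"unbaum--Banasiak Lemma~\ref{twocircles}) this arc lies in $B(x,\lambda)$. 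Hence $\widehat{p_ip_j}\subset\bigcap_{x\in\mathrm{bi}(K,\lambda)}B(x,\lambda)=\mathrm{bh}(K,\lambda)$, and since the arc lies on $S(v,\lambda)$ while $\mathrm{bh}(K,\lambda)\subset B(v,\lambda)$, it is a boundary arc of $\mathrm{bh}(K,\lambda)$ centered at the vertex $v$. Note that Lemma~\ref{4.0} applies under either hypothesis of the theorem, since in the non-strictly-convex case $\|p_i-p_j\|\leq\mathrm{diam}(K)\leq\lambda$. With this substitution your deduction of the first assertion from the second goes through essentially as written: the point $p_i$ is an endpoint of the $\partial\mathrm{bh}(K,\lambda)$-arcs centered at $a$ and at $b$, i.e.\ a point where distinct boundary arcs meet, hence a vertex; the ``different tangents'' refinement is not needed for that conclusion.
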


Both notions of ball hull and ball intersection are used for solving some versions of the 2-center problem in the Euclidean plane (see \cite{H-S}, \cite{H2})

Our paper is organized as follows:
\begin{itemize}
\item Section \ref{complexity ball} presents an algorithm for the ball intersection which takes $O(n \log n)$ time for strictly convex normed planes.
\item Section \ref{ball_intersection} shows an algorithm for the ball hull, taking $O(n \log n)$ time for strictly convex normed planes and based also on the results of Section \ref{complexity ball}.
\item Section \ref{2centerconstarinedcircles} contains  an algorithm for the 2-center problem with constrained circles, which takes $O(n^2)$ time for strictly convex normed  planes.
\item Section  \ref{algorithm ball hull idendical} yields  an algorithm for the ball hull, taking $O(n \log n)$ time for strictly convex normed planes and being identical to an algorithm of  Hershberger and Suri for the Euclidean subcase.
\item Section \ref{ball hull structure in a non strictly} contains  some results useful for studying the ball hull structure in a normed plane that is not necessarily  strictly convex.
\end{itemize}

%A non-zero vector $y\in \mathbb{M}^2$ is Birkhoff orthogonal to a non-zero vector $x\in \mathbb{M}^2$ if for any real $\lambda$ the inequality $\|y\|\leq \|y+\lambda x\|$ holds.

\bigskip

%More cites of \cite{Ma-Ma-Sp}, \cite{Ma-Ma-Sp2}

\section{The complexity of an algorithm for $\mathrm{bi}(K,\lambda)$.}\label{complexity ball}
If $\lambda\geq \mathrm{diam}(K)$, then the centroid  $x=\frac{1}{n}\sum_{i=1}^n x_i$ belongs to $\mathrm{bi}(K,\lambda)$ and  is easy to locate. But if $\mathrm{diam}(K)>\lambda\geq \lambda_K$, it is not so obvious how to locate a point belonging to the ball intersection of $K$. For example, if $\lambda=\lambda_K$, then  the centroid is not necessarily a Chebyshev center (see \cite{Ma-Ma-Sp}).

We can  easily construct  the set $\mathrm{bi}(K,\lambda)$ ordering the points $\{p_1, p_2,\dots,p_n\}$ of $K$, starting with $\mathrm{bi}(\{p_1\},\lambda)$ and adding a point of $K$ in every step:% ($O(n)$ complexity):

\begin{enumerate}
\item store $B(p_1,\lambda)$;
\item store $B(p_1,\lambda) \cap B(p_2,\lambda)$;
\item continue in the same way adding $p_3$, $p_4$, etc.
\end{enumerate}

Going this way, Hershberger and Suri  (see \cite{H-S}, Section 6.1, page 459) describe  an algorithm for computing $\mathrm{bi}(K,\lambda)$ in $O(n\log n)$ time for the Euclidean subcase.
They use this algorithm as a subroutine to solve the 2-center problem with centers at  points of $K$ (the 2-\emph{center problem}
\emph{with constrained circles}) in $O(n^2)$ time. In the present section, we rewrite the algorithm described in \cite{H-S} for a strictly convex normed plane, and  we use it in Section \ref{2centerconstarinedcircles} to solve the 2-center problem with constrained circles in  strictly convex normed  planes.

  Let us fix a Euclidean orthonormal system of reference   in the plane with basis $\{v_1,v_2\}$.  The points of a finite set   in this  plane can be ordered by their $x$-coordinates with respect to this basis, using the $y$-coordinate order for breaking the ties.

We consider the two lines parallel to the vector $v_2$ and supporting $\mathrm{bi}(K,\lambda)$,  and the  tangent points on $\partial \mathrm{bi}(K,\lambda)$ belonging to  them. The line meeting these two points separates  $\partial \mathrm{bi}(K,\lambda)$ in two components,  called \emph{upper chain} and \emph{lower chain} of $\partial \mathrm{bi}(K,\lambda)$.

We say that an arc $a_1$ is on the left with respect to the other arc $a_2$ if the leftmost point of $a_1$ has an $x$-coordinate smaller than the $x$-coordinate of the leftmost point of $a_2$, breaking the ties similarly as with the point order.

Every arc of $\mathrm{bi}(K,\lambda)$ has a center belonging to $K$. It is possible that some points of K are not centers of arcs of
$\mathrm{bi}(K,\lambda)$. The arcs of the upper (lower) chain can be ordered by this left-to-right order induced by their leftmost points. Hershberger and Suri state in \cite{H-S} that \emph{it is not difficult to see that the left-to-right order of the arcs along the upper (lower) chain of $\mathrm{bi}(K,\lambda)$  is just the reverse of the left-to-right order of the centers} (of these arcs), i.e., if $a_1,a_2,..,a_m$ is the ordered group
of arcs from left to right on the upper chain, and their centers are
$x_1,x_2,...,x_m$, respectively, then the centers $x_1,x_2,...x_m$ are ordered from right to
left. We prove in Lemma \ref{orderedarccenter} that this is also true for every strictly convex normed  plane.

Using a result of Gr\"{u}nbaum \cite{Grue1} and Banasiak \cite{Ban} (see Lemma \ref{twocircles} in Section \ref{ball hull structure in a non strictly}), in \cite{Ma-Ma-Sp2} the following lemma  is proved.

\begin{lem}\label{3.0}
Let $\mathbb{M}^2$ be a strictly convex normed plane  with unit disc $B$ and $p, q\in B$. Then the following statements hold true:
 \begin{enumerate}
 \item If  $p, q\in S(o,1)$ and there exists another circle $S(x,1)$ through $p$ and $q$, then $x=p+q$ and the origin $o$ and $x$ are in different half-planes with respect to the line $\langle p, q\rangle$.

 \item Any minimal circular arc of radius $1$ meeting $p$ and $q$ also belongs to $B$.

 \item  If a circular arc of radius 1 meeting $p$ and $q$ is  contained in $B$ such that it contains interior points of $B$, then this arc is a minimal circular arc.
 \end{enumerate}
\end{lem}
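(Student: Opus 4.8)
The plan is to treat (1) first and then build a single structural description of how a unit circle meets the unit disc $B$, from which both (2) and (3) will drop out.

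For (1), I would first note that $p+q$ is genuinely the centre of a unit circle through $p$ and $q$: since $\|p\|=\|q\|=1$, one has $\|p-(p+q)\|=\|{-q}\|=1$ and $\|q-(p+q)\|=\|{-p}\|=1$. The set of all centres $x$ of unit circles through both $p$ and $q$ is exactly $S(p,1)\cap S(q,1)$, and both $o$ and $p+q$ lie in it. Because $\mathbb{M}^2$ is strictly convex, the two distinct unit circles $S(p,1)$ and $S(q,1)$ meet in at most two points (the Gr\"unbaum--Banasiak result, Lemma \ref{twocircles}), so $S(p,1)\cap S(q,1)=\{o,p+q\}$; as the hypothesis excludes $x=o$, we get $x=p+q$. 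For the separation statement I would use that $o,p,p+q,q$ form a parallelogram, so its diagonals $\overline{o(p+q)}$ and $\overline{pq}$ share the midpoint $\tfrac12(p+q)$; hence $\langle p,q\rangle$ passes through the midpoint of $\overline{o(p+q)}$ and therefore separates $o$ from $p+q$, once one checks $o\notin\langle p,q\rangle$ (otherwise collinearity together with $\|p\|=\|q\|=1$ forces $q=-p$, i.e.\ $p+q=o$, contradicting $x\neq o$).

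For (2) and (3) the common engine is the following description. Fix a unit circle $S(y,1)$ with $y\neq o$ (the case $y=o$ is trivial, since then the arc lies on $\partial B$). The two points $w=y-\tfrac{y}{\|y\|}$ and $w'=y+\tfrac{y}{\|y\|}$ of $S(y,1)$ on the line through $o$ and $y$ satisfy $\|w\|=|\,\|y\|-1\,|$ and $\|w'\|=\|y\|+1$; whenever $0<\|y\|<2$ this gives $w\in\operatorname{int}B$ and $w'\notin B$. In the situations of (2) and (3) one indeed has $0<\|y\|<2$, since a circle through a point of $B$ has $\|y\|\le 2$ by the triangle inequality, with strict inequality unless the configuration degenerates (equal endpoints), which I would discard. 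Thus $S(y,1)$ meets both $\operatorname{int}B$ and the exterior of $B$, so it crosses $\partial B$; by Lemma \ref{twocircles} it does so in exactly two points $c_1,c_2$, which split $S(y,1)$ into an inner arc $A_{\mathrm{in}}$ (containing $w$, lying in $\operatorname{int}B$) and an outer arc $A_{\mathrm{out}}$ (containing $w'$, lying outside $B$). Consequently $S(y,1)\cap B=\overline{A_{\mathrm{in}}}$.

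The key claim is then that $\overline{A_{\mathrm{in}}}$ is precisely the minimal arc $\widehat{c_1c_2}$ of $S(y,1)$. Here I would invoke (1): since $c_1,c_2\in S(o,1)$ and $S(y,1)$ is a second unit circle through them, $o$ and $y$ lie on opposite sides of $\langle c_1,c_2\rangle$. As $w'$ is a positive multiple of $y$ of length exceeding $\|y\|$, it lies on the $y$-side of $\langle c_1,c_2\rangle$, hence so does the whole arc $A_{\mathrm{out}}$; therefore $A_{\mathrm{in}}$ lies on the side not containing $y$, which is exactly the defining property of the minimal arc. Now (3) follows at once: an arc through $p,q$ contained in $B$ and meeting $\operatorname{int}B$ forces $y\neq o$ and $A\subseteq S(y,1)\cap B=\widehat{c_1c_2}$, and since a sub-arc of a minimal arc is again minimal, $A=\widehat{pq}$ is minimal. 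For (2) the points $p,q$ lie in $S(y,1)\cap B=\widehat{c_1c_2}$, and the same sub-arc property gives $\widehat{pq}\subseteq\widehat{c_1c_2}\subseteq B$.

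I expect the main obstacle to be the identification $\overline{A_{\mathrm{in}}}=\widehat{c_1c_2}$, and more precisely the step \emph{``a sub-arc of a minimal arc is minimal''}. The subtlety is that minimality is defined relative to the chord of the endpoints, so the half-plane condition for $\widehat{c_1c_2}$ does not literally transfer to the chord $\langle p,q\rangle$. I would remove this by characterising the minimal arc as the one subtending less than a half-turn at the centre $y$ (equivalently, contained in the closed half-plane bounded by its chord and missing $y$); this property is manifestly inherited by sub-arcs, which is what closes both (2) and (3).
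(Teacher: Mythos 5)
First, a point of comparison: the paper itself contains no proof of Lemma \ref{3.0} --- it is quoted from \cite{Ma-Ma-Sp2}, with the remark that the proof there rests on the Gr\"unbaum--Banasiak result (Lemma \ref{twocircles}). Your reconstruction is built on exactly that tool and is sound in its architecture: in (1) you identify the centers of unit circles through $p$ and $q$ with $S(p,1)\cap S(q,1)$, which Lemma \ref{twocircles} plus strict convexity reduces to at most two points; for (2) and (3) you establish the structural fact that for $y\neq o$ the set $S(y,1)\cap B$ is exactly the minimal arc of $S(y,1)$ joining the two points of $S(y,1)\cap S(o,1)$, and both statements then follow from heredity of minimality under passing to sub-arcs. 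This is a legitimate, self-contained proof; in particular the inside/outside witnesses $w,w'$, the count of crossing points $c_1,c_2$, and the application of (1) to $c_1,c_2$ are all correct.

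Two steps need tightening. (a) In (1), the equality $S(p,1)\cap S(q,1)=\{o,p+q\}$ follows from ``contains both points and has cardinality at most two'' only when $o\neq p+q$; in the antipodal case $q=-p$ your later exclusion of $o\in\langle p,q\rangle$ is circular, because it invokes the conclusion $x=p+q$, which has not been justified in that case. The fix is immediate: if $q=-p$ and $\|z-p\|=\|z-q\|=1$, then $2=\|p-q\|\le\|p-z\|+\|z-q\|=2$, and equality in the triangle inequality in a strictly convex norm forces $p-z=z-q$, i.e.\ $z=o$; so no second circle exists and the hypothesis of (1) already rules this case out. (b) The sub-arc property is, as you yourself flag, the crux, but your ``half-turn'' characterization is asserted rather than proved equivalent to the chord definition, and the parenthetical restatement you give is just that definition again. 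The clean route uses the central symmetry of $B(y,1)$ about $y$: characterize the minimal arc as the arc of $S(y,1)$ containing no antipodal pair $\{z,\,2y-z\}$. A minimal arc cannot contain such a pair (its closed half-plane is convex, so it would contain the midpoint $y$, which lies strictly on the other side), while the non-minimal arc always contains the pair $\{p,\,2y-p\}$. ``No antipodal pair'' is manifestly hereditary, and combined with uniqueness of the minimal arc when $p,q$ are not antipodal on $S(y,1)$ --- automatic here, since $p,q\in B$ antipodal on $S(y,1)$ would force $\|p-q\|=2$ and hence $y=o$ --- it closes (2) and (3) exactly as you intend.
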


Discs of radius $1$ are considered only for simplicity; Lemma \ref{3.0}  is true for discs with arbitrary radius $\lambda$. This  lemma allows us to prove

\begin{lem}\label{orderedarccenter}
Let $\mathbb{M}^2$ be a strictly convex normed  plane. With the above conditions, if $K$ is a finite set in $\mathbb{M}^2$, then
the left-to-right order of the arcs along the upper (lower) chain of $\mathrm{bi}(K,\lambda)$  is just the reverse of the left-to-right order of the centers of these arcs.
\end{lem}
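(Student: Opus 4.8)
The plan is to prove the claim for the upper chain; the lower chain follows by an entirely symmetric argument (reflecting through the line determined by the two vertical support points). First I would set up the local picture at a single arc of the upper chain. Consider an arc $a$ of $\mathrm{bi}(K,\lambda)$ lying on the upper chain, with center $x\in K$. Since $\mathrm{bi}(K,\lambda)=\bigcap_{x_i\in K}B(x_i,\lambda)$ is an intersection of discs, the arc $a$ is a piece of the circle $S(x,\lambda)$, and locally $\mathrm{bi}(K,\lambda)$ lies on the side of $S(x,\lambda)$ containing the center $x$. The crucial geometric fact is that a point $p$ on the upper chain that lies on $S(x,\lambda)$ has its center $x$ located \emph{below} it (roughly, on the opposite vertical side), because $\mathrm{bi}(K,\lambda)$ must sit on the center's side of each bounding circle and the upper chain is the top boundary. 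So the task reduces to comparing the horizontal positions of the centers of two \emph{consecutive} arcs.

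The core step is a monotonicity statement for two adjacent arcs. Let $a_k$ and $a_{k+1}$ be consecutive arcs on the upper chain, left to right, meeting at a common vertex $p$, and let their centers be $x_k$ and $x_{k+1}$. Then $p\in S(x_k,\lambda)\cap S(x_{k+1},\lambda)$, i.e. $p$ lies on two distinct circles of radius $\lambda$. This is exactly the configuration governed by Lemma \ref{3.0}(1): after translating so that $p$ plays the role handled there, two circles of radius $\lambda$ through a common point $p$ together with a second common point force the strict convexity structure, and more to the point the two centers $x_k,x_{k+1}$ lie on opposite sides of the relevant chord. I would use strict convexity to guarantee that consecutive circles $S(x_k,\lambda)$ and $S(x_{k+1},\lambda)$ cross transversally at $p$ (they cannot be tangent, since a tangency would merge the arcs), so the two centers are genuinely distinct and the local crossing determines which arc lies on top immediately to the left of $p$ and which to the right.

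From the local crossing I would extract the order reversal as follows. Immediately to the left of the shared vertex $p$ the boundary follows $a_k$ (center $x_k$) and immediately to the right it follows $a_{k+1}$ (center $x_{k+1}$); by convexity of $\mathrm{bi}(K,\lambda)$ the boundary turns clockwise as one traverses the upper chain from left to right. Because the center of a circular arc lies on the concave side and the chain is the upper boundary, as the outward normal direction rotates clockwise along the upper chain, the centers—which point in the inward normal direction—must sweep in the reverse horizontal sense. Concretely, the leftmost point of $a_{k+1}$ lies to the right of the leftmost point of $a_k$ (that is the definition of the arc order), while the crossing established via Lemma \ref{3.0} forces $x_{k+1}$ to lie to the left of $x_k$. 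Chaining this comparison across all consecutive pairs $a_1,a_2,\dots,a_m$ yields that $x_1,x_2,\dots,x_m$ are ordered from right to left, which is the assertion.

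The main obstacle I anticipate is the passage from the purely local crossing information at each shared vertex to a clean \emph{global} horizontal order of the centers, handling ties and vertical coincidences correctly. In a general strictly convex norm the unit circle need not be symmetric, so phrases like ``below'' and ``opposite side'' must be made precise in terms of the supporting directions parallel to $v_2$ and the induced upper/lower decomposition, rather than borrowed from Euclidean intuition; the tie-breaking convention on leftmost points must be threaded through consistently. I would therefore isolate the one-vertex comparison as the technical heart, lean on Lemma \ref{3.0}(1) to pin down the side on which each center sits, and then argue the global order by a straightforward induction on consecutive arcs, taking care that strict convexity rules out the degenerate tangency and shared-center cases that would break the strict reversal.
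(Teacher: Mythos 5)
Your proposal has the same skeleton as the paper's proof (reduce to the upper chain, compare the centers of two consecutive arcs at their shared vertex, then chain the pairwise comparisons), but the decisive step --- showing that the center $x_{k+1}$ of the right-hand arc lies strictly to the \emph{left} of the center $x_k$ of the left-hand arc --- is never actually carried out, and the two ingredients you offer for it do not suffice. From Lemma \ref{3.0}(1) you use only the conclusion that the two centers lie on opposite sides of the chord joining the two intersection points of $S(x_k,\lambda)$ and $S(x_{k+1},\lambda)$; since that chord can have any orientation, this says nothing about the \emph{horizontal} order of the centers. The rest of your argument (``the centers point in the inward normal direction, hence sweep in the reverse horizontal sense as the outward normal rotates clockwise'') is purely Euclidean: in a general strictly convex normed plane the center of a circular arc does \emph{not} lie along the Euclidean inward normal of the arc, so this inference is unavailable --- it is essentially a restatement of what has to be proved. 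You flag this yourself as ``the main obstacle'', but your proposed fix (leaning again on the opposite-sides statement of Lemma \ref{3.0}(1)) cannot close it. A genuine Minkowski substitute would be the correspondence between boundary points of the unit ball and supporting directions, so that the vertex condition ``left slope greater than right slope'' translates into an ordering of the two radius vectors $p-x_k$ and $p-x_{k+1}$ on $\partial B$; but you never set this machinery up.

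The paper closes exactly this gap with the part of Lemma \ref{3.0}(1) that you do not use: if two distinct circles of radius $\lambda$ meet at $p^1$ and $q^1$ and the first center is placed at the origin $o$, then the second center \emph{equals} $p^1+q^1$. This vector identity turns the comparison into a coordinate computation: it suffices to show $p^1_1+q^1_1<0$, which the paper obtains by a case analysis of where $p^1$ and $q^1$ can sit on $S(o,\lambda)$ relative to the leftmost point $p^0$, the subcases in which $\widehat{p^1q^1}$ would be a minimal arc being excluded by the convexity of $\mathrm{bi}(K,\lambda)$. It then reduces the $n$-point case to the two-point case by observing that a common piece of $S(x^i,\lambda)\cap S(x^{i+1},\lambda)$ appearing on the upper chain of $\mathrm{bi}(K,\lambda)$ also appears, in the same arc order, on the upper chain of $\mathrm{bi}(\{x^i,x^{i+1}\},\lambda)$. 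In short: right frame, right lemma cited, but the key use of that lemma (the identity $x^1=p^1+q^1$) and the resulting computation are missing, and the Euclidean normal heuristic you substitute for them does not survive in a general normed plane.
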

\begin{proof}
The upper and the lower chain cases are similar, and it is sufficient to prove the upper chain case. Let us fix a Euclidean orthonormal system of reference  with basis $\{v_1, v_2\}$. There exist two  lines parallel to $v_2$ supporting $\mathrm{bi}(K,\lambda)$, and every of them has a unique tangent point on $\partial \mathrm{bi}(K,\lambda)$. These two points determine the upper and the lower chain of $\mathrm{bi}(K,\lambda)$.

Let us consider a set $K$ of two points. Let $p^0$ be the leftmost point on $\mathrm{bi}(K,\lambda)$ with  respect to the system of reference. Namely, $p^0$ is the tangent point of a supporting line of $\partial \mathrm{bi}(K,\lambda)$ parallel to $v_2$ with the smallest first coordinate. The upper chain of $\partial \mathrm{bi}(K,\lambda)$ going clockwise is the part of $\partial \mathrm{bi}(K,\lambda)$ from $p^0$ to the other tangent point created by the parallel supporting line. The lower chain is the other part of $\partial \mathrm{bi}(K,\lambda)$.

\begin{figure}[ht]
\begin{center}
\includegraphics[width=10cm]{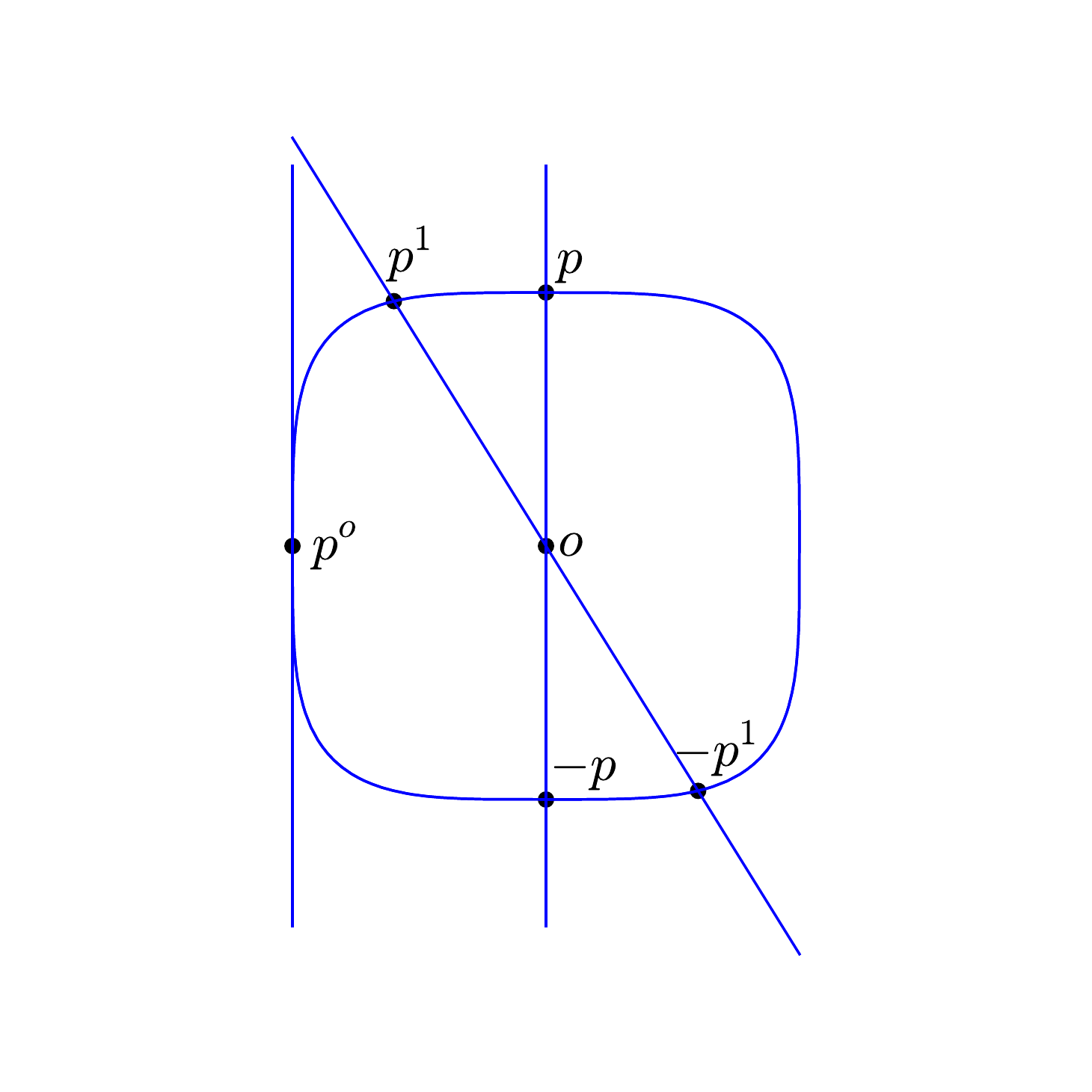}\\
\vspace*{-0.5cm}\caption{$p^1$ is between $p^o$ and $p$, clockwise.}\label{arcoscentros}
\end{center}
\end{figure}

Let $o$ and $x^1$ be the center of the first and the second arc, respectively, from left to right (in the arc order sense) over the upper chain of $\partial \mathrm{bi}(K,\lambda)$. Without loss  of generality, we can assume that the origin of the Euclidean  system of reference is $o$.

Moving from left to right along the upper chain of $\partial \mathrm{bi}(K,\lambda)$, let $p^1$ be the  vertex of $\partial\mathrm{bi}(K,\lambda)$ after $p^0$, namely $p^1\in S(o,\lambda)\cap S(x^1,\lambda).$ Let  $q^1$ be  the point such that $\{p^1,q^1\}=S(o,\lambda)\cap S(x^1,\lambda).$ The point $q^1$ is on $S(o,\lambda)$ between (clockwise) $p^1$ and $p^0$,  and by Lemma \ref{3.0}, $x=q^1+p^1$. %By the convexity of $\mathrm{bi}(K,\lambda)$,
 Let  $p$ be  the intersection \emph{upper} point between $S(o,\lambda)$ and the line parallel to $v_2$ containing $o$ (see Figure \ref{arcoscentros} and Figure \ref{arcoscentros2}). Since $\mathrm{diam}(\mathrm{bi}(K,\lambda))\leq 2\lambda$, the point $p^1$ is between $p^0$ and $-p^0$, clockwise. The following cases are possible.

Case 1: $p^1$ is on $S(o,\lambda)$ between $p^0$ and $p$, clockwise (as in  Figure \ref{arcoscentros}).

Subcase 1.1: $q^1$ is between $p^1$ and $-p^1$, clockwise. The arc $\widehat{p^1q^1}$, clockwise, is a minimal circular arc. Since  $x^1=q^1+p^1$, by the convexity of $\mathrm{bi}(K,\lambda)$ this situation is not possible.

Subcase 1.2: $q^1$ is between $-p^1$ and $p^0$, clockwise.  If $(p^1_1,p^1_2)$ and $(q^1_1,q^1_2)$ are the coordinates of $p^1$ and $q^1$, respectively, then $-p^1_1>q^1_1$. Therefore, $0>p^1_1+q^1_1$, and the point $x^1=p^1+q^1$ is on the left of $o$.

%Subcase 1.2: $q^1$ is between $-p$ and $p^0$, clockwise. Since $x^1=q^1+p^1$, then $x$ is on the left of $o$.

\begin{figure}[ht]
\begin{center}
\includegraphics[width=10cm]{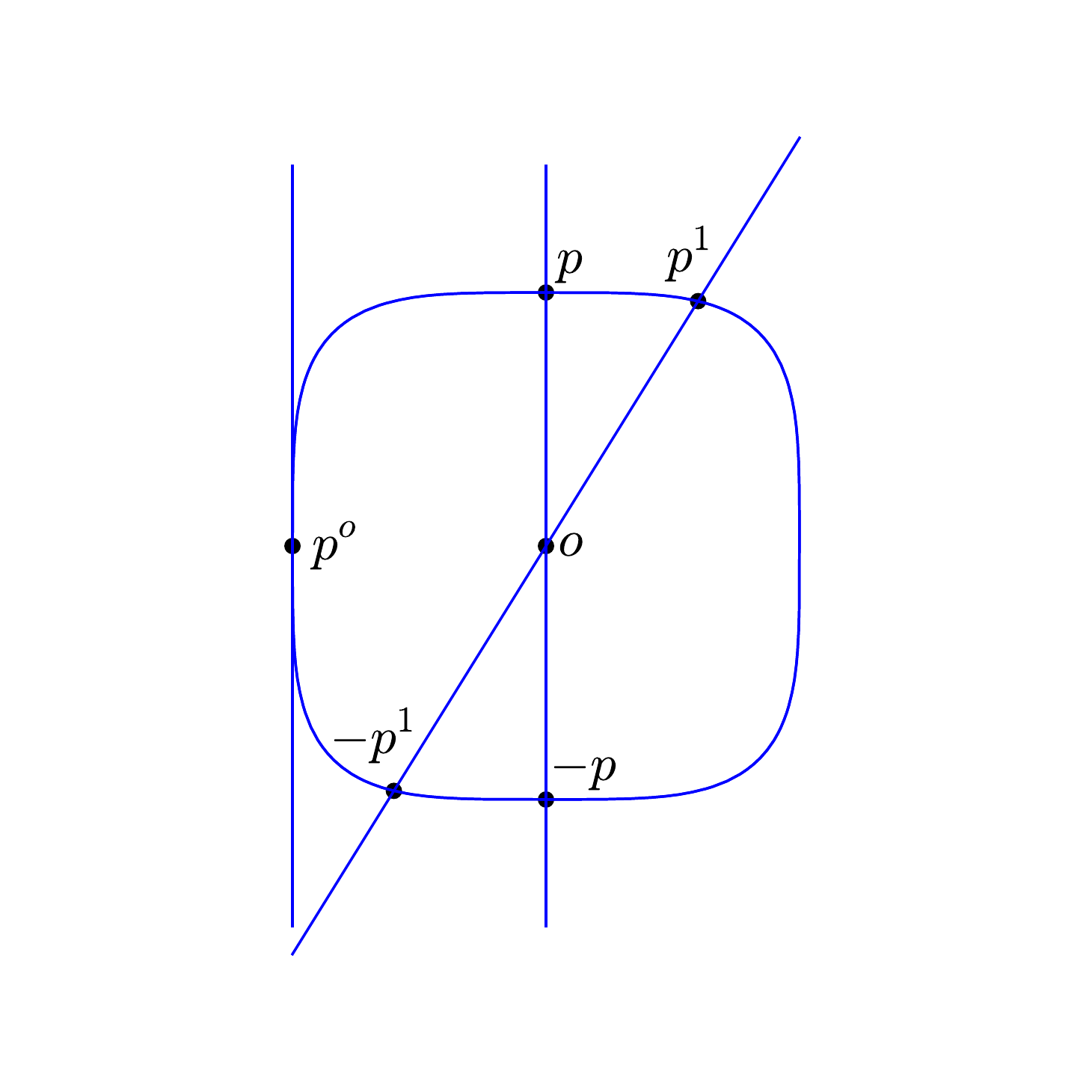}\\
\vspace*{-0.5cm}\caption{$p^1$ is between $p$ and $-p^0$, clockwise.}\label{arcoscentros2}
\end{center}
\end{figure}

Case 2: $p^1$ is on $S(o,\lambda)$ between $p$ and $-p^0$, clockwise (as in Figure \ref{arcoscentros2}).

Subcase 2.1: $q^1$ is between $p^1$ and $-p^1$, clockwise. The arc $\widehat{p^1q^1}$, clockwise, is a minimal circular arc. Since  $x^1=q^1+p^1$, by the convexity of $\mathrm{bi}(K,\lambda)$ this situation is not possible.

Subcase 2.2: $q^1$ is between $-p^1$ and $p^0$, clockwise. If $(p^1_1,p^1_2)$ and $(q^1_1,q^1_2)$ are the coordinates of $p^1$ and $q^1$, respectively, then $-p^1_1>q^1_1$. Therefore, $0>p^1_1+q^1_1$, and the point $x^1=p^1+q^1$ is on the left of $o$.

If $K$ is a set of $n$ points,  again $p^0$ is the leftmost point on $\mathrm{bi}(K,\lambda)$ with respect to the system of reference;  $p^1,p^2,\dots p^{m-1}$ are  the following vertices on the upper chain of $\partial \mathrm{bi}(K,\lambda)$, clockwise; $p^m$ is the rightmost point on $\mathrm{bi}(K,\lambda)$; $o$ is  the center of the arc $\widehat{p^0p^1}$; and $x^1,x^2,\dots x^m$ are the centers of the left-to-right ordered arcs $\widehat{p^1p^2},\widehat{p^2p^3},\dots,\widehat{p^{m-1}p^{m}}$, respectively.

We have proved the statement for a set $K$ containing two points. But if the set $K$ has $n$ points and a piece of $S(x^i,\lambda)\cap S(x^{i+1},\lambda)$ belongs to the upper chain of $\partial \mathrm{bi}(K,\lambda)$, then this piece also belongs to the upper chain of  $\mathrm{bi}(\{x^i,x^{i+1}\},\lambda)$, and their common arcs are located in the same arc order. Therefore, we can  repeatedly apply  the  statement  proved for two points to the pairs $(x^i,x^{i+1})$ and justify that the centers $x^1,x^2,\dots x^m$ are ordered conversely to the  sequence of  the arcs $\widehat{p^1p^2},\widehat{p^2p^3},\dots,\widehat{p^{m-1}p^{m}}$.
%Let $q_1$ the point such that $\{p_1,q_1\}=B(o,\lambda)\cap B(x,\lambda).$ We note $H_{p_1,q_1}^-$ to the half plane defined by $\langle p_1,q_1\rangle>$ containing $p_o$; $H_{p_1,q_1}^+$ to the other half plane;
% We note $\widehat{p_op_1}$ to the arc meeting $p_o$ and $p_1$ clockwise.  $\widehat{p_op_1}$ is a minimal arc because is on the boundary of$\mathrm{bi}(K,\lambda)$, and it is a part of the minimal arc $\widehat{q_1,p_1}$. The center $o$ and $\widehat{p_op_1}$ are in  different half planes defined by the line $\langle p_1,q_1\rangle>$, therefore $\{p_1,q_1\}\in H_{p_1,o}^-.$
 %In the neighborhood of $p_1$, the points of the sphere $S(x,\lambda)$ are contained inside $B(0,\lambda)$ when they belong to $H_{p_1,o}^+$, and outside $B(0,\lambda)$ when they belong to $H_{p_1,o}^-$.
%The centers $x$ and $o$ are in different
%We note $T_{o}^-$ to the half plane defined by the line $o+t (0,1)$ containing $p_o$, and  $T_{o}^+$ to the other half plane.
\end{proof}

After sorting the points of $K$ by the $x$-coordinate, it is easier and cheaper to build $\mathrm{bi}(K,\lambda)$, because starting with the leftmost arc and its center, one  only has to consider the centers  at the left side to find the following arc at the right one. Therefore, the upper (lower) chain of $\mathrm{bi}(K,\lambda)$ can be constructed in $O(n)$ time, as Hershberger and Suri describe: \emph{If a new circle contributes to the chain at all, its arc appears  at the left end of the chain, possibly removing  some previously added arcs. Computing the new boundary takes constant time, plus time proportional to the number of arcs deleted. Hence the overall bound} (of building $\mathrm{bi}(K,\lambda)$) \emph{is $O(n)$.}

\begin{theo}\label{algorithmballintersection}
Let $\mathbb{M}^2$ be a strictly convex normed plane. If $K$ is a set of $n$ points and $\lambda\geq\lambda_K$, then the set $\mathrm{bi}(K,\lambda)$ can be constructed  via an algorithm taking $O(n\log n)$ time.

\end{theo}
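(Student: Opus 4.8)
The plan is to split the running time into a sorting phase and a sweep phase, and to show that only the sorting is superlinear so that the overall bound is dominated by it. First I would sort the points $p_1,\dots,p_n$ of $K$ by their $x$-coordinate with respect to the fixed basis $\{v_1,v_2\}$, breaking ties by the $y$-coordinate; this is the only step costing $\Theta(n\log n)$, and everything afterwards must be arranged to run in $O(n)$ time. I would then build the upper chain and the lower chain of $\partial\,\mathrm{bi}(K,\lambda)$ separately, treating them by the same argument up to reflection.

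For the sweep phase I would process the centers in the sorted left-to-right order while maintaining the current chain in a stack. Correctness of a single left-to-right sweep rests entirely on Lemma \ref{orderedarccenter}: because the left-to-right order of the arcs along the upper chain is the reverse of the left-to-right order of their centers, a newly inserted center whose disc actually contributes must contribute its arc at the \emph{left} end of the current chain, and the arcs it invalidates form a (possibly empty) prefix of the current chain read from the left. Each insertion therefore reduces to popping the dominated arcs off the left end of the chain and pushing at most one new arc, exactly as in a Graham-type scan. The local update test --- given the new center and the current leftmost arc, decide whether that arc survives and, if so, compute the new vertex --- can be carried out in constant time using the description of circle intersections in Lemma \ref{3.0}, which identifies the relevant vertex as $p+q$ and fixes on which side of $\langle p,q\rangle$ the new center lies.

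The main obstacle is the amortized accounting rather than any new geometric input. I would argue that over the entire run each arc is pushed onto the stack at most once and popped at most once, so the total number of stack operations is $O(n)$; since each operation together with its constant-time intersection test is $O(1)$, the sweep runs in $O(n)$ time. A secondary point to verify carefully is that the monotonicity furnished by Lemma \ref{orderedarccenter} holds \emph{incrementally} --- i.e. the reverse-order property is preserved after each insertion, not only for the final configuration --- which is precisely what the closing paragraph of the proof of Lemma \ref{orderedarccenter} supplies by applying the two-point statement repeatedly to consecutive pairs $(x^i,x^{i+1})$. Combining the two phases yields the total running time $O(n\log n)+O(n)=O(n\log n)$, as asserted.
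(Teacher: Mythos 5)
Your proposal is correct and takes essentially the same route as the paper: sort by $x$-coordinate in $O(n\log n)$ time, then build each chain incrementally from left to right, invoking Lemma \ref{orderedarccenter} to ensure that each contributing circle adds its arc at the left end of the chain, with the push-once/pop-once amortization giving $O(n)$ for the sweep, exactly as in the paper's adaptation of Hershberger--Suri. One harmless slip: Lemma \ref{3.0} identifies the second \emph{center} through $p$ and $q$ as $p+q$ (not the intersection vertex itself); the constant-time local update is simply assumed as a primitive, just as in the paper's quotation of Hershberger and Suri.
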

\begin{proof}
Sorting the points of $K$ from left to right takes $O(n\log n)$ time. After the points are ordered, constructing  $\mathrm{bi}(K,\lambda)$ takes $O(n)$ time. Therefore, the total cost is $O(n\log n)$ time.
\end{proof}

\section{A ball hull algorithm based in the ball intersection algorithm}\label{ball_intersection}
 In the proof of Proposition 5.5 in \cite{Ma-Ma-Sp}  an algorithm for building $\mathrm{bh}(K,\lambda)$ in any normed plane valid for the case $\lambda\geq \mathrm{diam}(K)$ is implicitly described. It starts  with a point $x$ such that $K\subset B(x,\lambda)$. For example, the centroid of $K$ is a useful starting point.  This Proposition 5.5 in \cite{Ma-Ma-Sp} is extended in \cite{Ma-Ma-Sp2} for $\lambda \geq \lambda_K$ when the plane is strictly convex.
 But, unfortunately and as we noted above, it is not easy to locate the starting point for the algorithm when $\lambda< \mathrm{diam}(K)$.

 Nevertheless, we can first construct  the set $\mathrm{bi}(K,\lambda)$ as in Section \ref{complexity ball} in $O(n\log n)$ time, and after that we  carry on the steps described in \cite{Ma-Ma-Sp2} for building $\mathrm{bh}(K,\lambda)$. We develop this idea in the present section.

\begin{theo}\label{bi-bh2}
Let $K=\{p_1,p_2,\dots,p_n\}$ be a finite set in  a normed plane  $\mathbb{M}^2$ and $\lambda\geq \lambda_K$. Let $K'$ and $K''$ denote the set of vertices of $\mathrm{bi}(K,\lambda)$ and the set of vertices of $\mathrm{bh}(K,\lambda)$, respectively. If either $\lambda\geq \mathrm{diam}(K)$ or $\mathbb{M}^2$ is strictly convex, then
\begin{align}
\mathrm{bh}(K,\lambda)&= \mathrm{bi}(K',\lambda),\label{1}\\
\mathrm{bi}(K,\lambda)&= \mathrm{bi}(K'',\lambda).\label{2}
\end{align}
Furthermore, if $\mathbb{M}^2$ is strictly convex, the left-to-right order of the arcs along the upper (lower) chain of $\mathrm{bh}(K,\lambda)$  is just the reverse of the left-to-right order of the centers of these arcs (which belong to $K'$).
\end{theo}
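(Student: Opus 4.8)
The plan is to prove the three assertions in order, using Theorem~\ref{bi-bh} as the central structural link between $\mathrm{bh}$ and $\mathrm{bi}$ and then reducing the ordering claim to Lemma~\ref{orderedarccenter}. For equation~\eqref{1}, I would argue both inclusions. The inclusion $\mathrm{bh}(K,\lambda)\subseteq\mathrm{bi}(K',\lambda)$ follows because each vertex of $\mathrm{bi}(K,\lambda)$, i.e. each point of $K'$, is by Theorem~\ref{bi-bh} the center of an arc on $\partial\mathrm{bh}(K,\lambda)$; hence $\mathrm{bh}(K,\lambda)$ is covered by the ball of radius $\lambda$ centered at that vertex, and intersecting over all of $K'$ still contains $\mathrm{bh}(K,\lambda)$. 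For the reverse inclusion I would use Theorem~\ref{theo1}: the boundary of $\mathrm{bh}(K,\lambda)$ consists of arcs, and by Theorem~\ref{bi-bh} every such arc is generated by (centered at) a vertex of $\mathrm{bi}(K,\lambda)$, i.e. by a point of $K'$. Thus $\mathrm{bh}(K,\lambda)=\mathrm{conv}(\bigcup\widehat{p_ip_j})$ is realized entirely by balls centered at $K'$, which gives $\mathrm{bi}(K',\lambda)\subseteq\mathrm{bh}(K,\lambda)$.

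For equation~\eqref{2} the argument is symmetric, exchanging the roles of $\mathrm{bi}$ and $\mathrm{bh}$ via the second half of Theorem~\ref{bi-bh}. Here I would show $\mathrm{bi}(K,\lambda)=\mathrm{bi}(K'',\lambda)$ by noting that every arc of $\partial\mathrm{bi}(K,\lambda)$ is, by Theorem~\ref{bi-bh}, generated by a vertex of $\mathrm{bh}(K,\lambda)$, so the defining balls of $\mathrm{bi}(K,\lambda)$ that actually contribute a boundary arc are exactly those centered at points of $K''$; the non-contributing centers of $K$ are redundant in the intersection. One inclusion ($\mathrm{bi}(K,\lambda)\subseteq\mathrm{bi}(K'',\lambda)$ requires $K''\subseteq K$, which need not hold) must be handled carefully: $K''$ is a set of vertices of $\mathrm{bh}(K,\lambda)$, not a subset of $K$, so the reduction is that each defining ball $B(x,\lambda)$, $x\in K$, either contributes an arc — in which case $x$ equals the center-point associated to a vertex of $K''$ by Theorem~\ref{bi-bh} — or is superfluous. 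I would make this precise by showing the arcs generated by $K''$ recover the full boundary of $\mathrm{bi}(K,\lambda)$, so the two intersections coincide.

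The ordering statement is then immediate: by~\eqref{1} we have $\mathrm{bh}(K,\lambda)=\mathrm{bi}(K',\lambda)$, so the arcs of $\partial\mathrm{bh}(K,\lambda)$ are precisely the arcs of $\partial\mathrm{bi}(K',\lambda)$, whose centers form $K'$. Applying Lemma~\ref{orderedarccenter} to the finite set $K'$ in the strictly convex plane $\mathbb{M}^2$ yields that the left-to-right order of the arcs along the upper (lower) chain of $\mathrm{bi}(K',\lambda)=\mathrm{bh}(K,\lambda)$ is the reverse of the left-to-right order of their centers, which are exactly the points of $K'$. This is the desired conclusion.

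The main obstacle will be equation~\eqref{2}, specifically the bookkeeping of which defining balls of $\mathrm{bi}(K,\lambda)$ are redundant and the precise correspondence between contributing centers $x\in K$ and vertices of $\mathrm{bh}(K,\lambda)$ in $K''$. Theorem~\ref{bi-bh} gives the arc-to-vertex correspondence, but one must verify that dropping the non-contributing balls does not change the intersection — intuitively clear since a ball that contributes no boundary arc contains $\mathrm{bi}(K,\lambda)$ in its interior along the relevant portion, yet this needs the strict-convexity (or $\lambda\geq\mathrm{diam}(K)$) hypothesis to guarantee that arcs and their generating vertices are in genuine bijection, as supplied by Theorems~\ref{theo1} and~\ref{bi-bh}. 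Equation~\eqref{1} and the ordering claim are comparatively routine once the structural theorems are invoked.
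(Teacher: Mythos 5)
Your top-level strategy is the same as the paper's: the published proof consists of exactly two sentences, deducing \eqref{1} and \eqref{2} from Theorem~\ref{bi-bh} (together with its proof in \cite{Ma-Ma-Sp2}) and getting the ordering claim from Lemma~\ref{orderedarccenter}, and your treatment of the ordering claim via $\mathrm{bh}(K,\lambda)=\mathrm{bi}(K',\lambda)$ is correct. However, your elaboration of the two equations contains two genuine gaps. First, for $\mathrm{bi}(K',\lambda)\subseteq\mathrm{bh}(K,\lambda)$ you invoke ``every arc of $\partial \mathrm{bh}(K,\lambda)$ is generated by a vertex of $\mathrm{bi}(K,\lambda)$'' and attribute it to Theorem~\ref{bi-bh}. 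The theorem states the correspondences in the opposite direction: every arc of $\mathrm{bi}$ is centered at a vertex of $\mathrm{bh}$, and every vertex of $\mathrm{bi}$ is the center of \emph{some} arc of $\mathrm{bh}$; it does not say that \emph{every} arc of $\partial\mathrm{bh}$ has its center among the vertices of $\mathrm{bi}$. That converse (plus the claim that $\mathrm{bh}$, defined as an infinite intersection, equals the intersection of only the balls contributing boundary arcs) is precisely what the paper imports from the proof in \cite{Ma-Ma-Sp2}; as written, your step is unsupported. Relatedly, your forward inclusion rests on ``$x$ is the center of an arc on $\partial\mathrm{bh}$, hence $\mathrm{bh}\subseteq B(x,\lambda)$'', which is false for a general convex body (take the convex hull of a short circular arc and a far-away point); it can be repaired here, but the clean route avoids it entirely: every $x\in K'$ lies in $\mathrm{bi}(K,\lambda)$, so $K\subseteq B(x,\lambda)$, and therefore $\mathrm{bh}(K,\lambda)\subseteq B(x,\lambda)$ by the very definition of $\mathrm{bh}$ as the intersection of all radius-$\lambda$ balls containing $K$.

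Second, and more seriously, in \eqref{2} you never obtain $\mathrm{bi}(K,\lambda)\subseteq\mathrm{bi}(K'',\lambda)$. Redundancy removal plus ``the arcs generated by $K''$ recover the full boundary'' gives only the other inclusion, $\mathrm{bi}(K'',\lambda)\subseteq\mathrm{bi}(K,\lambda)$: for equality you must also show that the balls centered at those vertices of $\mathrm{bh}(K,\lambda)$ which generate \emph{no} arc of $\partial\mathrm{bi}(K,\lambda)$ still contain $\mathrm{bi}(K,\lambda)$, and ``so the two intersections coincide'' merely asserts this. You correctly flag this as the main obstacle, but your proposed bookkeeping does not close it. The fix is one line and needs no contributing/non-contributing analysis: if $z\in\mathrm{bi}(K,\lambda)$, then $K\subseteq B(z,\lambda)$, so $B(z,\lambda)$ is one of the balls intersected in the definition of $\mathrm{bh}(K,\lambda)$; hence $\mathrm{bh}(K,\lambda)\subseteq B(z,\lambda)$, in particular $K''\subseteq B(z,\lambda)$, i.e.\ $z\in\mathrm{bi}(K'',\lambda)$. (The genuinely structural directions are the remaining ones, $\mathrm{bi}(K',\lambda)\subseteq\mathrm{bh}(K,\lambda)$ and $\mathrm{bi}(K'',\lambda)\subseteq\mathrm{bi}(K,\lambda)$; there one needs facts of the type of Theorem~\ref{theo1} and Lemma~\ref{3.0}, e.g.\ that boundary arcs are minimal arcs joining consecutive vertices and hence lie in every radius-$\lambda$ ball containing those vertices, which is the content the paper draws from \cite{Ma-Ma-Sp2}.)
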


 \begin{proof}
From Theorem \ref{bi-bh} (and from its proof presented in \cite{Ma-Ma-Sp2}) one  can  deduce  (\ref{1}) and (\ref{2}). Using Lemma \ref{orderedarccenter}, we have the last  statement.

\end{proof}

%***************

%Since $K'\subseteq \mathrm{bi}(K,\lambda)$, if $\lambda\geq \mathrm{diam}(K)$, then (see Proposition 1 and Proposition 2 in  \cite{Ma-Ma-Sp})
%$$
%\mathrm{bi}(\mathrm{bi}(K,\lambda),\lambda) \subseteq \mathrm{bi}(K',\lambda)=\mathrm{bh}(K,\lambda) \subseteq \mathrm{bh}(\mathrm{bi}(K,\lambda),\lambda) %\subseteq \mathrm{bi}(\mathrm{bi}(K,\lambda),\lambda)
%$$

%***************

Therefore, having obtained $\mathrm{bi}(K,\lambda)$, one can construct $\mathrm{bh}(K,\lambda)$ in a strictly convex normed plane by plotting the arcs with centers in the vertices of $\mathrm{bi}(K,\lambda)$, describing finally the following algorithm:

\begin{enumerate}
\item Sorting the points of $K$ in $O(n\log n)$ time.
\item Building $\mathrm{bi}(K,\lambda)$ in $O(n)$ time (Theorem \ref{algorithmballintersection}).
\item Considering  the set $K'$ of sorted vertices $\{x_1,...,x_k\}$ of $\mathrm{bi}(K,\lambda)$ obtained in (2).
\item Building $\mathrm{bh}(K,\lambda)=\mathrm{bi}(K',\lambda)$ (Theorem \ref{bi-bh2}) in $O(n)$ time (Theorem \ref{algorithmballintersection}).
\end{enumerate}

\begin{theo}\label{algorithmballhull}
Let $\mathbb{M}^2$ be a strictly convex normed plane. If $K$ is a set of $n$ points and $\lambda\geq\lambda_K$, then the set $\mathrm{bh}(K,\lambda)$ can be constructed via an algorithm taking $O(n\log n)$ time.
\end{theo}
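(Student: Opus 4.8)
The plan is to assemble the four-step algorithm already outlined in the excerpt and verify that each step runs within the claimed bound, so that the total cost telescopes to $O(n\log n)$. First I would justify correctness of the pipeline: by Theorem \ref{bi-bh2}, equation (\ref{1}), we have $\mathrm{bh}(K,\lambda)=\mathrm{bi}(K',\lambda)$, where $K'$ is the set of vertices of $\mathrm{bi}(K,\lambda)$. Thus computing the ball hull reduces to two successive ball-intersection computations: first build $\mathrm{bi}(K,\lambda)$, extract its vertex set $K'$, and then build $\mathrm{bi}(K',\lambda)$. Since the excerpt permits us to assume Theorem \ref{bi-bh2}, the correctness half of the proof is immediate and requires no new geometric work.

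The remaining task is the running-time accounting. I would argue step by step. Sorting the $n$ points of $K$ by $x$-coordinate (breaking ties by $y$-coordinate) costs $O(n\log n)$. Given the sorted input, Theorem \ref{algorithmballintersection} guarantees that $\mathrm{bi}(K,\lambda)$ is constructed in $O(n)$ time, since the incremental construction adds each new arc at the left end of the chain and the total work for deletions is linear. Reading off the vertex set $K'=\{x_1,\dots,x_k\}$ of $\mathrm{bi}(K,\lambda)$ is a single linear pass, and since $k\leq n$ and the vertices are produced in sorted order by the chain construction, $K'$ arrives already sorted. Therefore the second invocation of the ball-intersection routine on $K'$ needs no further sorting and again costs $O(|K'|)=O(n)$ time by Theorem \ref{algorithmballintersection}. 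Summing the four contributions gives $O(n\log n)+O(n)+O(n)+O(n)=O(n\log n)$.

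The one point that needs genuine care is that the second ball-intersection call inherits the sortedness for free, so that we pay the $O(n\log n)$ sorting cost only once. This rests on the last statement of Theorem \ref{bi-bh2}: in a strictly convex plane the centers $K'$ of the arcs of the ball hull are ordered (left to right) in the reverse of the arc order, so the vertices of $\mathrm{bi}(K,\lambda)$ emerge from the chain construction in a coherent $x$-order that can be converted to the required sorted sequence in linear time. Without this observation one would be tempted to re-sort $K'$ and the bound would still hold, but it is cleaner to note that no re-sort is necessary. Hence the main (and only mild) obstacle is bookkeeping the order of $K'$ rather than any new analytic estimate; everything else is a direct appeal to the results already established. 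Collecting these observations completes the proof that $\mathrm{bh}(K,\lambda)$ is constructed in $O(n\log n)$ time.
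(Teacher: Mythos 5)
Your proposal is correct and is essentially identical to the paper's argument: the paper's proof consists precisely of the four-step pipeline you describe (sort $K$ in $O(n\log n)$, build $\mathrm{bi}(K,\lambda)$ in $O(n)$ via Theorem \ref{algorithmballintersection}, read off the sorted vertex set $K'$, and build $\mathrm{bh}(K,\lambda)=\mathrm{bi}(K',\lambda)$ in $O(n)$ using Theorem \ref{bi-bh2}). Your only slight imprecision is attributing the sortedness of $K'$ to the final statement of Theorem \ref{bi-bh2} rather than to the fact that the chain construction of $\mathrm{bi}(K,\lambda)$ outputs its vertices in $x$-order, but as you yourself note, even re-sorting $K'$ would leave the $O(n\log n)$ bound intact.
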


%\begin{rem} Perhaps  this last algorithm has smaller  complexity than the algorithm based literally on the proofs of Proposition 10 in \cite{Ma-Ma-Sp} and Proposition 1 in \cite{Ma-Ma-Sp2}.  It could be also valid for  planes which are not strictly convex and  when  $\lambda_K\geq \mathrm{diam}(K)$, if some results similar to Lemma \ref{orderedarccenter}, Theorem \ref{algorithmballintersection} and Theorem \ref{bi-bh2}  could be proved for such a subcase. Perhaps, Section \ref{ball hull structure in a non strictly} can be useful for this purpose.
%\end{rem}

\section{The 2-center problem with constrained circles}\label{2centerconstarinedcircles}

 The 2-center problem with constrained circles asks the following: given a (finite) set $K$ of points in the plane, one has to find two closed discs of suitably fixed radii whose centers belong to $K$ and whose union covers $K$. Let us assume that the fixed radii are $r$ and $1$ ($r\geq 1$). An algorithm by Hershberger and Suri (\cite{H-S}, page 459) solves this problem in the Euclidean plane taking $O(n^2)$ time in the following way:
\begin{enumerate}
\item Sorting the points of $K$ from left to right in $O(n \log n)$ time, according to  the $x$-coordinate.

\item For each point $p\in K$:
    \begin{enumerate}
    \item Determining the set $U$ of ordered points whose distance from $p$ is greater than the radius $r$.
    \item Afterwards, obtaining $\mathrm{bi}(U,1)$, which takes $O(n)$ time (see Section 6.1 in \cite{H-S}
    or Section \ref{complexity ball} in the present paper).
    \item Testing if $\mathrm{bi}(U,1)$ contains some point of $K$. This  can also be done in $O(n)$ time, "\emph{marching through $K$ from left to right, maintaining the two arcs of $\partial \mathrm{bi}(U,1)$ that overlap the $x$-coordinate of the current point. We sweep over each arc once, so the total cost is linear}".
        \end{enumerate}

\end{enumerate}
Therefore, the total time for solving this 2-center problem with constrained circles is $O(n \log n)+ n \cdot O(n)=O(n^2)$.

\begin{theo}
Let $\mathbb{M}^2$ be a strictly convex normed plane. If $K$ is a set of $n$ points and $r\geq 1$, the 2-center problem with constrained circles centered in the points of $K$ can be solved in $O(n^2)$ time.
\end{theo}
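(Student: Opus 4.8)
The plan is to transplant the Hershberger--Suri scheme essentially verbatim, replacing the Euclidean ball-intersection subroutine with the linear-time construction of $\mathrm{bi}(\cdot,1)$ established in Section~\ref{complexity ball}. The conceptual heart of the algorithm is a reduction that fixes which of the two discs has radius $r$. In any feasible solution the radius-$r$ disc is centered at some point $p\in K$, so I would simply try all $n$ candidates for $p$. For a fixed $p$, the points already covered by $B(p,r)$ are those within distance $r$ of $p$, and the remaining points $U=\{x\in K:\|x-p\|>r\}$ must all lie in a single disc $B(q,1)$ with $q\in K$. The key observation is that $U\subseteq B(q,1)$ is equivalent to $\|u-q\|\le 1$ for every $u\in U$, i.e.\ to $q\in\bigcap_{u\in U}B(u,1)=\mathrm{bi}(U,1)$. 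Hence a feasible pair with radius-$r$ disc centered at $p$ exists if and only if $\mathrm{bi}(U,1)$ contains at least one point of $K$, and the whole $2$-center problem is solvable precisely when this holds for some $p$.

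With this reduction in hand, the algorithm and its cost analysis are as follows. First I would sort $K$ from left to right once, at cost $O(n\log n)$; this order is reused throughout. Then, for each of the $n$ points $p\in K$, I would (i) extract $U$ together with its left-to-right order, which is inherited from the global order and therefore obtained in $O(n)$ time; (ii) build $\mathrm{bi}(U,1)$, which by Theorem~\ref{algorithmballintersection} takes only $O(n)$ time once $U$ is already sorted; and (iii) test whether $\mathrm{bi}(U,1)$ meets $K$. Each of the three substeps is linear, so the per-point cost is $O(n)$ and the total cost is $O(n\log n)+n\cdot O(n)=O(n^2)$, matching the Euclidean bound.

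The step that requires genuine justification in the strictly convex normed setting is the containment test (iii), which Hershberger and Suri carry out by sweeping through $K$ from left to right while maintaining the two arcs of $\partial\,\mathrm{bi}(U,1)$ that overlap the $x$-coordinate of the current query point. This linear-time sweep rests on three structural facts, all of which are available here: $\mathrm{bi}(U,1)$ is convex, being an intersection of convex discs; its boundary decomposes into circular arcs of radius $1$ with centers in $U$, as recorded just before Theorem~\ref{theo1}; and the arcs along the upper and lower chains appear in the reverse of the left-to-right order of their centers, which is exactly the content of Lemma~\ref{orderedarccenter}. Given this monotone correspondence, as the query $x$-coordinate increases the relevant arc on each chain advances monotonically, every arc is visited once, and the sweep indeed runs in $O(n)$ time. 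I expect this transfer of the sweep argument to be the main obstacle, since it is the only place where the Euclidean proof uses metric structure beyond the mere convexity of $\mathrm{bi}(U,1)$; once Lemma~\ref{orderedarccenter} supplies the ordering property, no further strict-convexity input is needed and the remaining steps are routine.
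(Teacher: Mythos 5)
Your proposal is correct and takes essentially the same route as the paper: both run the Hershberger--Suri constrained $2$-center algorithm unchanged (sort $K$ once, then for each $p\in K$ form the set $U$ of points at distance greater than $r$ from $p$, build $\mathrm{bi}(U,1)$ with the $O(n)$ subroutine of Theorem~\ref{algorithmballintersection}, and test whether $\mathrm{bi}(U,1)$ meets $K$ by the left-to-right sweep), giving $O(n\log n)+n\cdot O(n)=O(n^2)$. Your explicit reduction (feasibility for a fixed $p$ is equivalent to $K\cap\mathrm{bi}(U,1)\neq\emptyset$) and your justification of the linear-time sweep via Lemma~\ref{orderedarccenter} merely spell out details the paper leaves implicit in its citation of \cite{H-S}.
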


\begin{proof}
The 2-center problem with constrained circles for strictly convex normed planes can be solved with the same algorithm as presented in \cite{H-S} using Theorem \ref{algorithmballintersection} in Section \ref{complexity ball} for the ball intersection.
\end{proof}

\section{A ball hull algorithm identical to Hershberger-Suri's one}\label{algorithm ball hull idendical}
 Hershberger and Suri (\cite{H-S}, p. 443) presented an algorithm constructing the ball hull of $n$ points in $O(n \log n)$ time in the  Euclidean plane. In this section, we adapt this algorithm for  strictly convex normed  planes.

The following result is proved in \cite{Ma-Ma-Sp2}.
\begin{lem}\label{4.0}
Let $\mathbb{M}^2$ a normed plane. Let $p$ and $q$ be two points belonging to a disc of radius $\lambda$. If $\mathbb{M}^2$ is strictly convex or $\|p-q\|\leq \lambda$, then
\begin{enumerate}
\item there exist only two minimal arcs of radius $\lambda$ meeting $p$ and $q$ (which may degenerate to the segment $\bar{pq}$),
\item every disc of radius $\lambda$ containing $p$ and $q$ also contains the minimal circular arcs of radius $\lambda$ meeting $p$ and $q$,
\item for every $\alpha\geq \lambda$, each disc of radius $\lambda$ containing $p$ and $q$ also contains the minimal circular arcs of radius $\alpha$ meeting $p$ and $q$,
\item if a circular arc of radius $\lambda$ meeting $p$ and $q$ is contained in a disc of radius $\lambda$ such that it contains interior points of the disc, then this arc is a minimal circular arc.
\end{enumerate}

\end{lem}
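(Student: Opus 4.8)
The plan is to reduce everything to unit radius by the homothety $x\mapsto x/\lambda$, so that from now on $\lambda=1$ and $p,q$ lie in a common unit disc; in particular $\|p-q\|\le 2$. The centres of the unit circles passing through both $p$ and $q$ are exactly the points of $S(p,1)\cap S(q,1)$, so the whole lemma is really a statement about this intersection and the two circular segments it cuts off. I would treat the two hypotheses (strict convexity; or $\|p-q\|\le 1$ in a general plane) in parallel, importing Lemma \ref{3.0} in the first case and the two-circle result of Gr\"unbaum and Banasiak (Lemma \ref{twocircles}) in the second.

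I would settle (1) first, since it fixes the picture. In the strictly convex case I translate an arbitrary unit circle through $p,q$ so that its centre becomes the origin; then $p,q\in S(o,1)$, and Lemma \ref{3.0}(1) says the only other unit circle through $p,q$ is centred at $p+q$, with the two centres lying on opposite sides of $\langle p,q\rangle$. Translating back, $S(p,1)\cap S(q,1)$ consists of exactly two points, giving exactly two unit circles and hence two minimal arcs $A^{+},A^{-}$, one on each side of the chord, with no degeneration possible under strict convexity. In the merely general case $\|p-q\|\le 1$ the same conclusion comes from Lemma \ref{twocircles}: as the centres $p,q$ are at distance at most $1<2$, the circles $S(p,1)$ and $S(q,1)$ meet in two points, except that the minimal arc collapses to $\overline{pq}$ exactly when $\partial B$ contains a segment parallel to $\overline{pq}$ of the relevant length, which is the announced degenerate case.

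For (2) the cleanest route is through the ball hull. Applying Theorem \ref{theo1} to $K=\{p,q\}$ is legitimate precisely under our hypotheses, because $\mathrm{diam}(K)=\|p-q\|$, so its proviso ``strictly convex or $\lambda\ge \mathrm{diam}(K)$'' becomes ``strictly convex or $\|p-q\|\le 1$''. The theorem then identifies $\mathrm{bh}(\{p,q\},1)$ with $\mathrm{conv}(A^{+}\cup A^{-})$, that is, with the lens $B(x^{+},1)\cap B(x^{-},1)$. Since by definition $\mathrm{bh}(\{p,q\},1)=\bigcap_{p,q\in B(x,1)}B(x,1)$ is contained in every unit disc containing $p$ and $q$, each minimal arc, lying on its boundary, is contained in every such disc, proving (2). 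Statement (4) is the converse incidence, and I would argue it as in Lemma \ref{3.0}(3): if an arc of $S(x,1)$ meeting $p,q$ lies inside a unit disc $B(c,1)$ yet touches its interior, then $S(x,1)$ and $S(c,1)$ must cross, and the trapped sub-arc is the one on the far side of $\langle p,q\rangle$ from $x$, i.e.\ the minimal one; in the non-strictly-convex case this crossing analysis is again supplied by Lemma \ref{twocircles}.

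Finally (3). Given a unit disc $D\ni p,q$, statement (2) and convexity give $D\supseteq \mathrm{bh}(\{p,q\},1)=B(x^{+},1)\cap B(x^{-},1)$, so it suffices to show that each minimal arc $A_{\alpha}^{\pm}$ of radius $\alpha\ge 1$ lies in this lens. Since the $\alpha$-arcs share the endpoints $p,q$ and lie on the correct sides of the chord, everything reduces to the nesting claim: the radius-$\alpha$ minimal arc on a given side is contained in the circular segment of radius $1$ on that side, i.e.\ a flatter (larger-radius) minimal arc lies between the chord and the radius-$1$ minimal arc. This monotonicity in the radius is the step I expect to be the main obstacle, because one cannot simply compute as in the Euclidean case; I would establish it by a support/convexity comparison of the two circular segments along the common chord $\overline{pq}$ (or, after normalising the two radii by a homothety, by a further appeal to Lemma \ref{twocircles}), and then conclude (3) as above.
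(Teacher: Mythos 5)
There is a genuine gap, and in fact two. First, your derivation of parts (2) and (3) is circular. The paper itself gives no proof of this lemma (it is quoted from \cite{Ma-Ma-Sp2}), but at the start of Section \ref{ball hull structure in a non strictly} it states explicitly that the proof of Theorem \ref{theo1} ``necessarily uses'' the fact that every ball of radius $\lambda$ containing $p$ and $q$ contains all minimal arcs of radius $\geq\lambda$ meeting them --- that is, parts (2) and (3) of the very lemma you are proving (Lemma 4 of \cite{Ma-Ma-Sp2}). So you cannot invoke Theorem \ref{theo1} for $K=\{p,q\}$ to get (2) and the lens identification $\mathrm{bh}(\{p,q\},1)=B(x^{+},1)\cap B(x^{-},1)$: the logical dependence runs the other way. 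The detour is also unnecessary in the strictly convex case, where (2) is literally Lemma \ref{3.0}(2) after translating and rescaling; the real issue is the general case $\|p-q\|\le\lambda$, which Theorem \ref{theo1} cannot settle for you anyway without the circularity.

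Second, part (3) --- the assertion the rest of the paper actually needs --- is not proved. You correctly isolate the required nesting claim (the minimal arc of radius $\alpha\geq\lambda$ lies between the chord and the radius-$\lambda$ minimal arc on the same side), call it ``the main obstacle,'' and then defer it to ``a support/convexity comparison'' or ``a further appeal to Lemma \ref{twocircles}''; that is a plan, not an argument, and it is precisely the step where the Euclidean computation has no analogue. There are also smaller but real errors: in the non-strictly-convex case your part (1) claims $S(p,1)\cap S(q,1)$ consists of two points, whereas by Lemma \ref{twocircles} it is a union of two segments which can be nondegenerate even when $\|p-q\|\le 1$ (maximum norm, $p=(0,0)$, $q=(1,0)$: the centers form two horizontal segments). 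To conclude that there are only two minimal arcs you must show that every center $x$ in a nondegenerate segment component yields the same degenerate arc, because then $\overline{pq}\subset S(x,\lambda)$ and $\langle p,q\rangle$ supports $B(x,\lambda)$; this is exactly the analysis of Subcase 2.1 and Proposition \ref{3.0extended} in Section \ref{ball hull structure in a non strictly}, and it is absent from your sketch. Part (4) has the same defect: ``the two circles must cross, and the trapped sub-arc is the minimal one'' is only asserted, and in the non-strictly-convex setting it again requires the Gr\"unbaum--Banasiak case analysis rather than a reference to it.
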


Only for simplicity, we often use in this section discs of radius $1$ and denote by $\B(K)$ to the  ball hull  with  radius 1 of the set $K$.

\begin{lem} [Lemma 4.9 in \cite{H-S}]\label{lem4.9}
Let $A$ and $B$ be disjoint sets of points. If $v\in A$ is from $\partial \mathrm{bh}(A\cup B)$, then $v$ is also from $\partial \mathrm{bh}(A).$ If $v_1,v_2\in A$ are consecutive vertices of $\partial \mathrm{bh}(A\cup B)$, then there exists a minimal arc\footnote{In \cite{H-S}, the statement is formulated \emph{"..., then the arc between them appearing..."}, but there exist two minimal arcs of radius $\lambda$ meeting every pair of points.} of radius one meeting $v_1$ and $v_2$ appearing on both $\partial \mathrm{bh}(A\cup B)$ and $\partial \mathrm{bh}(A).$
\end{lem}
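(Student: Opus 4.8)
The plan is to derive both assertions from a single monotonicity property of the ball hull, combined with part (2) of Lemma \ref{4.0}. First I would record the nesting $\mathrm{bh}(A)\subseteq\mathrm{bh}(A\cup B)$. This follows directly from the defining intersection: since $A\subseteq A\cup B$, any ball of radius one containing $A\cup B$ also contains $A$, so the family of balls intersected to form $\mathrm{bh}(A\cup B)$ is a subfamily of the one defining $\mathrm{bh}(A)$; intersecting fewer balls yields the larger set. In particular this gives the interior inclusion $\mathrm{int}(\mathrm{bh}(A))\subseteq\mathrm{int}(\mathrm{bh}(A\cup B))$.

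For the first assertion, I note that $v\in A\subseteq\mathrm{bh}(A)$. If $v$ were an interior point of $\mathrm{bh}(A)$, then by the interior inclusion it would also be interior to $\mathrm{bh}(A\cup B)$, contradicting $v\in\partial\mathrm{bh}(A\cup B)$. Hence $v\in\partial\mathrm{bh}(A)$.

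For the second assertion, let $\widehat{v_1v_2}$ be the arc of $\partial\mathrm{bh}(A\cup B)$ joining the consecutive vertices $v_1$ and $v_2$. By Theorem \ref{theo1} this is a minimal circular arc of radius one lying on a circle $S(x,1)$ whose disc satisfies $A\cup B\subseteq B(x,1)$; in particular $A\subseteq B(x,1)$, so $B(x,1)$ is one of the balls defining $\mathrm{bh}(A)$ and therefore $\mathrm{bh}(A)\subseteq B(x,1)$. I would then show $\widehat{v_1v_2}\subseteq\mathrm{bh}(A)$ by a second intersection argument: for every disc $B(y,1)\supseteq A$ the points $v_1,v_2$ lie in $B(y,1)$, so by part (2) of Lemma \ref{4.0} that disc contains the minimal arcs of radius one meeting $v_1$ and $v_2$, and in particular the specific arc $\widehat{v_1v_2}$; intersecting over all such $y$ places $\widehat{v_1v_2}$ inside $\mathrm{bh}(A)$. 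Finally, since $\widehat{v_1v_2}\subseteq S(x,1)=\partial B(x,1)$ while $\mathrm{bh}(A)\subseteq B(x,1)$, no point of the arc can be interior to $\mathrm{bh}(A)$; being contained in $\mathrm{bh}(A)$ yet having no interior point, the arc lies on $\partial\mathrm{bh}(A)$, which is exactly the claim.

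The two nesting inclusions and the interior argument are routine. The step requiring the most care is tracking the correct one of the two minimal arcs meeting $v_1$ and $v_2$: the arc selected by $\partial\mathrm{bh}(A\cup B)$ is precisely the one lying on $S(x,1)$, and it is essential that part (2) of Lemma \ref{4.0} guarantees every qualifying disc contains \emph{both} minimal arcs, so the intersection argument captures this specific arc rather than the other one.
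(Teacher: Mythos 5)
Your proposal is correct and follows essentially the same route as the paper: the first assertion is proved by exactly the same argument (the nesting $A\subseteq\mathrm{bh}(A)\subseteq\mathrm{bh}(A\cup B)$ plus the interior inclusion), and your second part is simply a fleshed-out version of the paper's one-line appeal to the boundary structure of Theorem \ref{theo1} together with that nesting, with Lemma \ref{4.0}(2) supplying the detail that every disc of radius one containing $A$ contains the specific minimal arc $\widehat{v_1v_2}$. Your closing observation about tracking the correct one of the two minimal arcs is exactly the point the paper flags in its footnote, so nothing is missing.
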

\begin{proof}
Let $v\in A\cap \partial \mathrm{bh}(A\cup B)$. Since $A \subseteq \mathrm{bh}(A)\subseteq \mathrm{bh}(A\cup B)$, every point belonging to the interior of $\mathrm{bh}(A)$ is a point of the interior of $\mathrm{bh}(A\cup B)$. Therefore, $v$ is not an interior point of $\mathrm{bh}(A)$.

The second part of the statement holds because of the boundary structure of the ball hull (Theorem \ref{theo1}) and $A \subseteq \mathrm{bh}(A)\subseteq \mathrm{bh}(A\cup B).$
\end{proof}

The data structure (for storing the information about points, vertices of the ball hull and so on) in \cite{H-S} is introduced in p. 444 and extended in p. 446. The same structure is also valid in a strictly convex normed plane. The points of an input set $K$ are ordered by their $x$-coordinates, and  a complete binary tree %\footnote{A \emph{complete binary tree} is a  tree in which 1) every node has at most two children, 2) every level, except possibly the last, is completely filled, and 3) all nodes are as far left as possible.}
 $T(K)$ is used to organize them. The leaves of $T(K)$ are the ordered points of $K$. Every node of $T(K)$ represents the ball hull of the points in the leaves of its subtree. Therefore, the root of $T(K)$ represents the ball hull of the points $K$. In fact, every node represents a ball hull, and this information is stored like a doubly linked list of its vertices such that for every vertex the predecessor and the successor is known. Since a point
  can be the vertex of more than one ball hull, for economizing space every point is only stored as vertex at the highest level in the tree at which it appears on a ball hull.

Let $F$ be the set of points represented by a node in the tree. We denote by $L$ and $R$ the sets of points associated to the left and right children nodes, respectively, of the $F$-node.

\begin{lem} [Lemma 4.10 in \cite{H-S}] \label{lem4.10}
Let $L$ and $R$ be two finite sets of  points separable by a line $x=x_{split}.$ At most one arc of $\B (L)$ and one arc of $\B (R)$ cross the vertical line $x=x_{split}$. $\B (L)$ contains $R$ if and only $\B (L)$ contains $\B (R)$ (and vice versa in the symmetric sense).
\end{lem}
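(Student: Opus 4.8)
The plan is to treat the two assertions separately, since they rest on different ideas: the crossing statement is a convexity fact about the boundary arcs, whereas the containment equivalence is a formal consequence of the description of the ball hull as an intersection of unit discs.

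For the crossing statement I would first record, using Theorem \ref{theo1}, that the boundary of $\B(L)$ is a convex curve assembled from minimal arcs of radius one whose endpoints, the vertices of $\B(L)$, all belong to $L$; since $L$ lies strictly to the left of the line $x=x_{split}$, every vertex of $\B(L)$ satisfies $x<x_{split}$. Next I would consider the convex cap $\B(L)\cap\{x\ge x_{split}\}$. Its relative boundary inside $\B(L)$ is the portion of $\partial\B(L)$ lying in the half-plane $x\ge x_{split}$, and, being the boundary contribution of an intersection of two convex sets, this portion is a single connected subarc of the boundary curve. As its interior contains no vertex (all vertices have $x<x_{split}$), this subarc is contained in one single circular arc of $\B(L)$. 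Hence at most one arc of $\B(L)$ meets the open half-plane $x>x_{split}$, i.e. at most one arc crosses the line; the claim for $\B(R)$ follows by the mirror argument, all vertices of $\B(R)$ having $x>x_{split}$.

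For the containment equivalence, one implication is immediate: if $\B(L)\supseteq\B(R)$, then, since $R\subseteq\B(R)$, also $\B(L)\supseteq R$. For the converse I would use the defining formula $\B(L)=\bigcap_{L\subseteq B(x,1)}B(x,1)$. Assuming $R\subseteq\B(L)$, for every disc $B(x,1)$ occurring in this intersection we have $R\subseteq\B(L)\subseteq B(x,1)$, so $B(x,1)$ is one of the unit discs containing $R$; by the definition $\B(R)=\bigcap_{R\subseteq B(y,1)}B(y,1)$ this forces $\B(R)\subseteq B(x,1)$. Intersecting over all $x$ with $L\subseteq B(x,1)$ yields $\B(R)\subseteq\bigcap_x B(x,1)=\B(L)$, as desired. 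Here $\B(R)$ is well defined because $R\subseteq\B(L)$ already exhibits a unit disc containing $R$, so $\lambda_R\le 1$. The symmetric statement is obtained by exchanging the roles of $L$ and $R$.

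The main obstacle is the first part: making rigorous that the piece of $\partial\B(L)$ lying to the right of $x_{split}$ is a \emph{single} arc. The crux is to combine the convexity of $\B(L)$, which guarantees connectedness of the cap boundary, with the fact, drawn from Theorem \ref{theo1}, that the vertices separating consecutive arcs are exactly the input points of $L$ and hence all lie on the left of $x_{split}$; once both facts are in place, the vertex-free connected boundary piece must be contained in one circular arc. The containment equivalence, by contrast, is essentially formal and needs neither strict convexity nor the detailed arc structure, only the intersection-of-discs description of the ball hull.
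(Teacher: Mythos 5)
Your proof is correct, but it takes a genuinely different route from the paper's, which offers no self-contained argument at all: the paper simply asserts that Hershberger and Suri's Euclidean proof carries over to strictly convex planes once Lemma \ref{4.0} (the minimal-arc properties) is available. Your treatment decomposes the lemma differently. For the crossing claim you use exactly two facts: convexity of $\B(L)$, so that the portion of $\partial\B(L)$ in the open half-plane $x>x_{split}$ is a single connected piece of the boundary curve, and the boundary structure of Theorem \ref{theo1}, so that every vertex of $\B(L)$ belongs to $L$ and hence lies to the left of the line; a connected, vertex-free piece of the boundary must then lie inside one single arc. The assertion that the vertices of $\B(L)$ are points of $L$ is the one place where strict convexity really enters (it is how the paper reads Theorem \ref{theo1}, and it is implicit in Lemma \ref{lem4.9}); strictly speaking it deserves a one-line justification, e.g.\ that in a strictly convex plane two distinct circles of equal radius cross in at most two points, so a boundary arc of the hull cannot terminate at a relative interior point of a minimal arc. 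For the containment equivalence your argument is purely formal: every unit disc containing $L$ contains $\B(L)\supseteq R$, hence is one of the discs whose intersection defines $\B(R)$, so it contains $\B(R)$; intersecting over all such discs gives $\B(R)\subseteq\B(L)$. This half needs neither strict convexity nor Lemma \ref{4.0} and is valid in every normed plane (indeed for hulls formed by translates of any fixed convex body), which is a sharper observation than the paper's blanket reference: the paper's route lets Lemma \ref{4.0} carry the entire adaptation of the Euclidean proof, whereas yours isolates the crossing statement as the only part of the lemma where the strictly convex boundary structure is actually used.
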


\begin{proof}
For strictly convex normed planes, the proof from \cite{H-S} is sufficient by using  Lemma \ref{4.0}.% (Lemma 2 in \cite{Ma-Ma-Sp2}).
\end{proof}

\begin{lem} [Lemma 4.11 in \cite{H-S}] \label{lem4.11}
Let $L$ and $R$ be two finite sets of  points separable by a line $x=x_{split}.$ If neither $\B (L)$ nor $\B (R)$ contains the other, then $\partial \B (L\cup R)$ contain two arcs not in $\partial \B (L)$ or $\partial \B (R)$, both crossing $x=x_{split}$. The other arcs in $\B (L\cup R)$ form two continuous chains, one from $\partial \B (L)$ and one from $\partial \B (R)$.
\end{lem}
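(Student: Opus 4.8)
The plan is to describe $\partial\B(L\cup R)$ as a cyclic sequence of arcs of three types --- arcs inherited from $\B(L)$, arcs inherited from $\B(R)$, and \emph{bridge} arcs joining a vertex of $L$ to a vertex of $R$ --- and then to prove that exactly two bridge arcs occur. First I would recall from the boundary structure of the ball hull (Theorem \ref{theo1}) that the vertices of $\B(L\cup R)$ belong to $L\cup R$ and that consecutive vertices are joined by a single minimal arc. Because the line $x=x_{split}$ strictly separates $L$ from $R$, each boundary vertex is labelled by its side: the $L$-vertices are exactly those with $x<x_{split}$ and the $R$-vertices exactly those with $x>x_{split}$. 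By Lemma \ref{lem4.9}, every $L$-vertex of $\B(L\cup R)$ is a vertex of $\B(L)$, and two consecutive $L$-vertices are joined by a minimal arc appearing simultaneously on $\partial\B(L)$ and $\partial\B(L\cup R)$; symmetrically for $R$. Thus every arc of $\partial\B(L\cup R)$ is an $L$-arc, an $R$-arc, or a bridge arc, and the problem reduces to counting the bridge arcs.

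Next I would show that both sides genuinely contribute vertices. If, say, no $R$-vertex occurred on $\partial\B(L\cup R)$, then all its vertices and all the connecting arcs would be those of $\B(L)$, whence $\B(L\cup R)=\B(L)$; since $R\subseteq\B(L\cup R)$ this gives $R\subseteq\B(L)$, and Lemma \ref{lem4.10} then yields $\B(R)\subseteq\B(L)$, contradicting the hypothesis that neither hull contains the other. Hence at least one vertex of each side appears; in particular the leftmost vertex of $\B(L\cup R)$ lies in $L$ and the rightmost in $R$.

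The heart of the argument is a monotonicity count. I would split $\partial\B(L\cup R)$ at its leftmost and rightmost vertices into an upper and a lower chain, as in Lemma \ref{orderedarccenter}. Since $\B(L\cup R)$ is convex, the $x$-coordinate is monotone along each chain, so within each arc the $x$-coordinate stays between the $x$-coordinates of its two endpoints. Reading the upper chain from the leftmost vertex ($x<x_{split}$, in $L$) to the rightmost vertex ($x>x_{split}$, in $R$), the label switches from $L$ to $R$ exactly once, at the unique arc whose $x$-range contains $x_{split}$; the same holds on the lower chain. This produces exactly two bridge arcs, one per chain, each joining a point with $x<x_{split}$ to one with $x>x_{split}$ and hence crossing the line by the intermediate value theorem, whereas every $L$-arc stays in $\{x<x_{split}\}$ and every $R$-arc in $\{x>x_{split}\}$. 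The two blocks of $L$-vertices (the tail of the lower chain and the head of the upper chain) meet at the leftmost vertex, so the $L$-arcs form one continuous chain from $\partial\B(L)$, and dually the $R$-arcs form one continuous chain from $\partial\B(R)$. Finally, each bridge arc has one endpoint in $R$ (resp.\ $L$), which is not a vertex of $\B(L)$ (resp.\ $\B(R)$); since by Theorem \ref{theo1} every arc of $\partial\B(L)$ joins two vertices of $L$, a bridge arc cannot belong to $\partial\B(L)$ or $\partial\B(R)$, and the two bridge arcs are new.

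The step I expect to be the main obstacle is making the boundary classification rigorous in the presence of degeneracies: guaranteeing that every vertex of $\B(L\cup R)$ really is a point of $L\cup R$, that consecutive same-side vertices are joined by a single shared minimal arc --- this is exactly where strict convexity enters, through Lemmas \ref{4.0} and \ref{lem4.9}, to exclude segments and spurious second minimal arcs --- and that the $x$-coordinate is monotone along each chain so that the label changes precisely once per chain. Ties in the $x$-order are handled by the tie-breaking rule fixed earlier, and the convexity of $\B(L\cup R)$ together with strict convexity of the norm ensures the boundary is a genuine convex curve with no straight pieces that could spoil the monotonicity or the crossing count.
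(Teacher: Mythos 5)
Your proposal is correct in substance, but you should know how it relates to the paper: the paper does not actually prove this lemma at all. Its entire proof is the single sentence ``The proof in \cite{H-S} is also valid for strictly convex normed planes.'' So your argument is not a variant of the paper's proof --- it is the proof the paper omits, reconstructed from the paper's own toolkit. Your route (classify the arcs of $\partial \B(L\cup R)$ into $L$-arcs, $R$-arcs and bridge arcs via Lemma \ref{lem4.9}; exclude the case that one side contributes no vertices via Lemma \ref{lem4.10}; then count label switches along the two chains) is essentially the Euclidean argument of Hershberger and Suri transplanted to the normed setting, which is precisely what the authors assert can be done. What your write-up buys is that it makes explicit where strict convexity enters: through Lemma \ref{4.0} and Lemma \ref{lem4.9} (the shared minimal arc between consecutive same-side vertices) and through the fact that spheres contain no line segments, which is what keeps the crossing count clean.

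One technical slip should be repaired. You split $\partial \B(L\cup R)$ at its leftmost and rightmost \emph{vertices}, and you assert that within each arc the $x$-coordinate stays between the $x$-coordinates of its two endpoints. Both statements can fail: the leftmost and rightmost points of $\B(L\cup R)$ (the tangent points of the vertical supporting lines, which is where the paper splits into chains, cf.\ Section \ref{complexity ball}) need not be vertices at all --- for $L=\{p\}$, $R=\{q\}$ with $\overline{pq}$ nearly vertical, they lie in the relative interiors of the two arcs of the lens --- and the arc containing the leftmost point of the hull attains $x$-values smaller than at either of its endpoints. The patch is routine: either split at the extreme points, where the $x$-coordinate is \emph{strictly} monotone along each chain (no vertical segments on the boundary, by strict convexity), so each chain meets the line $x=x_{split}$ exactly once and necessarily in the relative interior of an arc; or argue globally that a line through the interior of a convex body meets its boundary in exactly two points, that neither of these points is a vertex (vertices lie in $L\cup R$, hence off the line), and that each of the two arcs containing them must join an $L$-vertex to an $R$-vertex, since otherwise the boundary would cross the line more than twice. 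With either repair, your count of exactly two new arcs, the crossing property, and the two continuous chains follow as you describe.
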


\begin{proof}
The proof in \cite{H-S} is also valid for strictly convex normed planes.
\end{proof}
The two arcs described in the previous lemma are named the \emph{outer common tangents}, in the circular sense, of $L$ and $R$.

Given a set of points $F$ represented by a node in the tree, it is possible to build $\mathrm{bh}(F)$ from the points $L$ and $R$ associated to the left and right children nodes, respectively, of the $F$-node following this way:
\begin{enumerate}
\item \emph{Determining whether either $\mathrm{bh}(L)$ contains $\mathrm{bh}(R)$ or vice versa, and finding their common tangents if neither does}. In Lemma 4.12 in \cite{H-S}  a technique  is presented  doing this in linear time in the sizes of the lists of vertices involved. We rewrite this lemma for every strictly convex normed  plane in the form of Lemma \ref{lem4.12} below.
\item \emph{Updating the list of vertices of $\mathrm{bh}(F)$ once the tangents are known}. This takes constant time.
\end{enumerate}

The construction of $\mathrm{bh}(K)$ uses a divide-and-conquer algorithm. Every node is obtained from the combination of the two children, starting from the leaves of the tree.

\begin{lem} [Lemma 4.12 in \cite{H-S}]\label{lem4.12}
The data structure $T(K)$ can be built in $O(n \log n)$ time.
\end{lem}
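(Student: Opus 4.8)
The plan is to establish the $O(n \log n)$ bound by a standard divide-and-conquer recurrence, once the cost of the ``merge'' step at each node is shown to be linear in the sizes of the child ball hulls. First I would recall the structure of the tree $T(K)$: it is a complete binary tree on the $n$ sorted points, so it has height $O(\log n)$, and the total number of leaves at each level sums to $n$. The root represents $\mathrm{bh}(K)$, and each internal node represents $\mathrm{bh}(F)$ for the point set $F$ in its subtree, stored as a doubly linked list of its vertices. The key economy is that each point is stored only once, at the highest node where it appears as a vertex of a ball hull; this is what will let the per-level work telescope.

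Next I would analyze the work done at a single internal node with children $L$ and $R$, separable by the vertical line $x = x_{split}$. By Lemma \ref{lem4.10}, at most one arc of $\B(L)$ and one arc of $\B(R)$ cross this line, and containment of one ball hull in the other is equivalent to containment of the underlying point set; by Lemma \ref{lem4.11}, if neither contains the other, then $\partial\B(L\cup R)$ is formed from a chain of $\partial\B(L)$, a chain of $\partial\B(R)$, and two outer common tangent arcs crossing $x=x_{split}$. I would invoke the two-step construction procedure stated just before the lemma: finding the tangents (or detecting containment) in time linear in the sizes of the vertex lists of $\B(L)$ and $\B(R)$, followed by updating the vertex list of $\B(F)$ in constant time. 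The merge therefore costs $O(|V(\B(L))| + |V(\B(R))|)$.

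The main obstacle, and the crux of the argument, is to bound the total merge cost across all nodes so that it does not blow up to $O(n^2)$. A naive bound would charge each node the full size of its children's hulls, but a vertex can appear on many ball hulls up the tree. The resolution is the charging scheme enabled by the ``highest level'' storage convention together with Lemma \ref{lem4.9}: when two sets $A$ and $B$ are merged, a point of $A$ that is \emph{not} a vertex of $\mathrm{bh}(A \cup B)$ is, by Lemma \ref{lem4.9}, a point where $\mathrm{bh}(A)$ already agrees locally, and such a point disappears from all higher hulls, so it is examined only at the levels where it still survives as a vertex. I would argue that the number of vertices examined during the tangent search at a node can be charged to vertices that are \emph{deleted} (i.e.\ that fail to survive to the parent hull), plus a constant number of surviving boundary vertices near the split line. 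Since each point is deleted at most once over the entire course of the algorithm, the total deletion charge is $O(n)$, and the surviving-vertex charge is $O(1)$ per node for $O(n)$ nodes, giving $O(n)$ total merge cost summed over the whole tree beyond the cost of simply traversing it.

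Finally I would assemble the recurrence. Sorting the points by $x$-coordinate to build the leaf structure takes $O(n \log n)$. Building the tree bottom-up visits $O(n)$ nodes; the charging argument shows the aggregate merge cost is $O(n)$, but one must still account for the fact that the tree has $O(\log n)$ levels and that vertices surviving to higher levels are re-traversed during tangent searches. I would present the clean bound as the recurrence $T(n) = 2\,T(n/2) + O(n)$, whose solution is $O(n \log n)$, where the $O(n)$ term at each level is justified by the observation that the sum of child-hull sizes processed across all nodes on one level is $O(n)$ (each surviving vertex lies on at most one hull per level by the highest-level storage rule). This matches the Euclidean bound of Hershberger and Suri, and since every geometric ingredient used in their merge --- the tangent-finding of Lemma \ref{lem4.12}, the crossing bound of Lemma \ref{lem4.10}, the structural decomposition of Lemma \ref{lem4.11}, and the inheritance property of Lemma \ref{lem4.9} --- has been re-established for strictly convex normed planes via Lemma \ref{4.0}, the same complexity analysis carries over verbatim.
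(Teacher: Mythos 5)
There is a genuine gap, and it is a circularity: the crux of your argument is the sentence where you ``invoke the two-step construction procedure stated just before the lemma'' (tangent finding or containment detection in linear time, then a constant-time update). But in the paper that passage does not establish anything; it explicitly defers the linear-time technique to Lemma \ref{lem4.12} itself, so proving it \emph{is} the content of the lemma. Concretely, two things must be shown and your proposal shows neither. First, that testing whether $\B(L)\supseteq\B(R)$ (or vice versa) takes linear time; the paper does this by observing that $\B(L)$ contains $\B(R)$ if and only if the vertices of $R$ lie in the rightmost arc of $\B(L)$, an $O(|R|)$ check. Second, and this is the heart of the matter, that the outer common tangent arcs of Lemma \ref{lem4.11} can be found in $O(|L|+|R|)$ time in a \emph{strictly convex normed plane}. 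The paper proves this with a rolling-disc argument imported from \cite{Ma-Ma-Sp2}: for consecutive vertices $l_0,l_1,l_2$ of $\B(L)$, every unit circle ``between'' the circles $B_{0,1}$ and $B_{1,2}$ through $\widehat{l_0l_1}$ and $\widehat{l_1l_2}$ contains $\B(L)$, and near $l_1$ such a circle cannot meet both boundary arcs of $B_{0,1}\cap B_{1,2}$ except at $l_1$; this monotonicity is what guarantees that in the search over candidate arcs $\widehat{v_jl_i}$ a vertex $l_i$, once discarded, is discarded definitively, so the two indices only advance and the search terminates after at most $|L|+|R|$ steps. This is precisely where strict convexity (via Lemma \ref{4.0} and the results of \cite{Ma-Ma-Sp2}) enters the proof, and it is absent from your proposal.

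A secondary problem is your middle paragraph: the claim that the aggregate merge cost over the whole tree is $O(n)$, obtained by charging examined vertices to deleted ones plus $O(1)$ survivors per node, is neither justified nor needed. The tangent search at a node may traverse vertices that survive onto $\partial\B(L\cup R)$ and are traversed again at higher levels, so the ``each point is deleted at most once'' accounting does not bound the total work; the paper makes no such claim. Fortunately you then fall back to the recurrence $f(n)=2f(n/2)+O(n)$ with solution $O(n\log n)$, which is exactly the paper's accounting (linear merge cost per node, sets at one level partitioning $K$), so your final assembly is correct \emph{conditional} on the linear-time merge --- but that conditional step is the lemma, and it remains unproved in your write-up.
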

\begin{proof}
The proof of this lemma in \cite{H-S} is valid for strictly convex planes. It is based on the following:

 1) \emph{Determining whether either $\mathrm{bh}(L)$ contains $\mathrm{bh}(R)$ or vice versa, takes linear time.}

 $\mathrm{bh}(L)$ contains $\mathrm{bh}(R)$ if and only if the vertices of $R$ are contained in $\mathrm{bh}(L)$, and if and only if these vertices are contained in the rightmost arc of $\mathrm{bh}(L)$. To check this will take $O(|R|)$ time. Similarly for the reciprocal. If neither contains the other, then, by Lemma \ref{lem4.11}, common tangents exist.

 2) \emph{Finding the common tangents of  $\mathrm{bh}(L)$ and $\mathrm{bh}(R)$ takes,  if they exist, $O(|L|+|R|)$ time.}

 Let $l_0$, $l_1$, $l_2$ be three consecutive (counterclockwise) vertices of $\mathrm{bh}(L)$. Let us consider the minimal arcs $\widehat{l_0l_1}$ and $\widehat{l_1l_2}$ on $\partial\mathrm{bh}(L)$ and the unit discs $B_{0,1}$ and $B_{1,2}$ whose circles contain these arcs. If we move (counterclockwise) the center of the disc $B_{0,1}$  along $S(l_1,1)$ to get the center of the disc $B_{1,2}$, then every unit circle "between" them throughout this movement contains $\mathrm{bh}(L)$ (see the constructive proof of Theorem \ref{theo1} in \cite{Ma-Ma-Sp2}). Furthermore, as it is explained in the proof of Lemma 9 in \cite{Ma-Ma-Sp2}, the circle  "between" (counterclockwise) $B_{0,1}$ and $B_{1,2}$ cannot simultaneously contain points of both the arcs which form the boundary of $B_{0,1}\cap B_{1,2}$, apart from $l_1$. In the proximity of $l_1$, every circle of these discs  has a branch belonging to $B_{0,1}\setminus B_{1,2}$,  and a  branch belonging to $B_{1,2}\setminus B_{0,1}$

Let $v_1$ be the rightmost point of $R$. To find the common tangents, we pick a vertex $l_1$ of $\mathrm{bh}(L)$ and find the unit circle $B_{0,1}$, which is tangent to $\mathrm{bh}(L)$ at $l_1$. Then we roll $B_{0,1}$ around the boundary of $\mathrm{bh}(L)$ until 1) it passes through $v_1$, and 2) the upper arc from $v_1$ to $\mathrm{bh}(L)$ (counterclockwise) is a minimal arc. For this purpose, it is sufficient to check the
conditions 1) and 2) for the arcs $\widehat{v_1l_i}$, with $l_i\in l$. It takes $O(|L|)$ time.

For simplicity, we assume that the arc $\widehat{v_1l_1}$ satisfies conditions 1) and 2). Starting with the circle which contains $\widehat{v_1l_1}$, we roll it (similarly as above)  around $\mathrm{bh}(L)$ counterclockwise, preserving fixed its $\mathrm{bh}(L)$-vertex tangent point of the circle while it is possible, until it becomes tangent to $\mathrm{bh}(R)$. Let us denote by  ${v_2,...,v_r}$ the (counterclockwise) consecutive vertices of $\mathrm{bh}(R)$ following after $v_1$ (similar notation for $l_2,...,l_s$). The upper common tangent is one of the arcs meeting a pair of points $(v_j,l_i)$, and we can find it starting with $\widehat{v_1,l_1}$ and moving the set of vertices $v_j$ counterclockwise until an arc $\widehat{v_jl_1}$ is not tangent to $\mathrm{bh}(L)$ for some $j$. If this happens, we continue with the arcs $\widehat{v_jl_2}$ increasing the index of the vertices $v_j$ but starting from the last index $j$ reached. Continuing with this process, a point $l_i$ is discarded in a step when an arc $\widehat{v_jl_i}$ is not tangent to $\mathrm{bh}(L)$ for some $j$. If $l_i$ is discarded in a step $\widehat{v_k,l_i}$, then $l_i$ is discarded definitely, and we continue the search with the arc $\widehat{v_{k},l_{i+1}}$. Therefore, we need at most $|L|+|R|$ steps, and so it takes $O(|L|+|R|)$ time to
 locate the upper common tangent.

 3) \emph{Updating the list of vertices of $\mathrm{bh}(F)$ when  the tangents are known takes constant time}.

It is possible to build the list for the root of $T(K)$ building recursively the lists for the children and combining them later. The running time of the algorithm is given by the recurrence $f(n)=2f(n/2)+O(n)$, which has the solution $f(n)=O(n \log n).$
\end{proof}
As a consequence of Lemma \ref{lem4.12}, we obtain the following Theorem.
\begin{theo}
Let $\mathbb{M}^2$ be a strictly convex normed plane. If $K$ is a set of $n$ points, then the set $\mathrm{bh}(K,\lambda)$ can be built with a divide-and-conquer algorithm taking $O(n\log n)$ time.
\end{theo}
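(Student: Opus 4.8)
The plan is to derive this theorem as an immediate corollary of Lemma \ref{lem4.12}, just as the surrounding text suggests. The data structure $T(K)$ is a complete binary tree whose root node stores, as a doubly linked list of its vertices, precisely the ball hull $\mathrm{bh}(K,\lambda)$ of the full input set. Thus, once $T(K)$ has been built, the desired set $\mathrm{bh}(K,\lambda)$ is available at the root, and no additional work is required beyond reading it off. Consequently, the proof reduces to invoking Lemma \ref{lem4.12}, which asserts that the whole structure $T(K)$ can be constructed in $O(n\log n)$ time.

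First I would recall the three ingredients that make the divide-and-conquer construction correct and efficient, all established earlier in this section. The combinatorial correctness of merging two children into their parent rests on Lemma \ref{lem4.11}, which guarantees that when neither $\mathrm{bh}(L)$ nor $\mathrm{bh}(R)$ contains the other, the parent's boundary consists of one continuous chain inherited from each child together with exactly two new arcs (the outer common tangents); and on Lemma \ref{lem4.10}, which handles the containment cases and the crossing behavior at the splitting line. The efficiency of each merge step is the content of the three numbered claims inside the proof of Lemma \ref{lem4.12}: containment testing is linear, locating the common tangents is $O(|L|+|R|)$ via the rolling-circle argument, and updating the vertex list once the tangents are known is constant time. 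Crucially, all of these facts hold in a strictly convex normed plane because Lemma \ref{4.0} supplies the strictly-convex analogues of the Euclidean properties of minimal arcs (uniqueness of the two minimal arcs meeting a pair of points, and the fact that every disc of radius $\lambda$ containing two points contains the minimal arc between them).

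Next I would assemble the running-time bound. Because $T(K)$ is a balanced binary tree built bottom-up from the leaves, each level of the tree is processed by a collection of merges whose sizes sum to $O(n)$, and there are $O(\log n)$ levels; equivalently, the work obeys the recurrence $f(n)=2f(n/2)+O(n)$ recorded at the close of Lemma \ref{lem4.12}, whose solution is $f(n)=O(n\log n)$. One must not forget the preprocessing cost of sorting the points of $K$ by $x$-coordinate so that the leaves of $T(K)$ are in the required order; this too is $O(n\log n)$ and hence does not affect the overall bound. I would then conclude that the root of $T(K)$, read off in $O(n)$ time, yields $\mathrm{bh}(K,\lambda)$, so the entire construction runs in $O(n\log n)$ time.

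The main obstacle is not in this final wrap-up, which is essentially bookkeeping, but in the correctness of the tangent-finding procedure that was already absorbed into Lemma \ref{lem4.12}; since that lemma is available to me as a black box, the only genuine care required here is to confirm that nothing beyond a strictly convex norm (in particular, no smoothness or Euclidean-specific uniqueness of circumballs) is silently used when passing from $T(K)$ to $\mathrm{bh}(K,\lambda)$. I expect this to be clean, because the boundary structure of the ball hull as a union of minimal arcs is exactly what Theorem \ref{theo1} provides under strict convexity, and the consistency of the left-to-right arc ordering with the reverse center ordering is guaranteed by Lemma \ref{orderedarccenter} and Theorem \ref{bi-bh2}. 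The theorem therefore follows directly from Lemma \ref{lem4.12}.
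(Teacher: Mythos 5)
Your proposal is correct and follows exactly the paper's route: the paper states this theorem with no separate proof, presenting it as an immediate consequence of Lemma \ref{lem4.12} (``As a consequence of Lemma \ref{lem4.12}, we obtain the following Theorem''), which is precisely your reduction. Your additional bookkeeping (sorting preprocessing, the recurrence $f(n)=2f(n/2)+O(n)$, reading the hull off the root) simply makes explicit what the paper leaves implicit inside the proof of that lemma.
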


\begin{rem}
\cite{Ma-Ma-Sp2} and Lemma \ref{4.0} are also useful in order to prove the rest of Hershberger's and Suri's results in section 4.3 of the same paper \cite{H-S}, which allow them to solve  the  2-center problem in the Euclidean plane (for the constrained subcase in $O(n^2)$ time, and for the unconstrained subcase in $O(n^2 \log n)$ time). This 2-center problem for strictly convex normed planes is studied in section \ref{2centerconstarinedcircles} (constrained subcase) in the present paper and  in \cite{Ma-Ma-Sp2} (unconstrained subcase).

\end{rem}
\section{The ball hull structure in a  more general setting}\label{ball hull structure in a non strictly}
%Let $p$ and $q$  be two points of  the circle $S(x,\lambda)$. In the following, the \textit{minimal circular arc of $S(x,\lambda)$ meeting $p$ and $q$ } is the piece of $S(x,\lambda)$  with endpoints $p$ and $q$ which lies in the half-plane bounded by the line $\langle p, q\rangle$ and   not containing the center $x$. If $p$ and $q$ are opposite in $S(x,\lambda)$, then the two half-circles with endpoints $p$ and $q$ are minimal circular arcs of $S(x,\lambda)$ meeting $p$ and $q$. We denote a minimal circular arc meeting $p$ and $q$ by  $\widehat{pq}$.

If either $\lambda\geq \mathrm{diam}(K)$ or $\mathbb{M}^2$ is strictly convex, Theorem \ref{theo1} %Theorem 3 from \cite{Ma-Ma-Sp} and Theorem 1 from \cite{Ma-Ma-Sp2}
 describes the boundary structure of $\mathrm{bh}(K,\lambda)$. For its proof (see \cite{Ma-Ma-Sp} and \cite{Ma-Ma-Sp2}) it is necessary to use that every ball of radius $\lambda$ containing a pair of points of $K$ always contains all minimal arcs of radius larger than or equal to $\lambda$ meeting this pair of points (Lemma 4 in \cite{Ma-Ma-Sp2}). But this is not true in a normed plane which is not  strictly convex whether $\mathrm{diam}(K)>\lambda\geq \lambda_K$ (for instance, when we consider the maximum  norm). In order to describe the ball hull structure in a general normed plane, it is interesting to clarify what happens with these $\lambda$-minimal arcs meeting two points contained in a ball of the same radius.

Gr\"{u}nbaum \cite{Grue1} and Banasiak \cite{Ban} proved the following lemma (see also \cite[$\S$ 3.3]{MSW}).

\begin{lem}\label{twocircles} Let $\mathbb{M}^2$ be a normed plane. Let $C\subset \mathbb{M}^2$ be a compact, convex disc whose  boundary is the closed curve $\gamma$; $v$ be a vector in $ \mathbb{M}^2$; $C'=C+v$ be a translate of $C$ with boundary $\gamma'$. Then $\gamma\cap \gamma'$ is the  union of two segments, each of which may degenerate to a point or to the empty set.

Suppose that this intersection consists of two connected non-empty components $A_1$, $A_2$. Then the two lines parallel to the line
of translation and supporting $C\cap C'$ intersect $C\cap C'$  exactly in $A_1$ and $A_2.$

Choose a point $p_i$ from each component $A_i$ and let $c_i=p_i-v$ and $c_i'=p_i+v$ for $i=1,2.$ Let $\gamma_1$ be the part of $\gamma$ on the same side of the line $\langle p_1, p_2\rangle$ as $c_1$ and $c_2$; let $\gamma_2$ be the part of $\gamma$ on the  side of  $\langle p_1, p_2\rangle$ opposite to  $c_1$ and $c_2$; and similarly for $\gamma'$, $\gamma_1'$, and $\gamma_2'$.

Then $\gamma_2\subseteq \operatorname{conv}(\gamma_1')$ and $\gamma_2'\subseteq \operatorname{conv}(\gamma_1).$
\end{lem}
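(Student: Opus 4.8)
The plan is to treat the statement as a purely affine/convex-geometric fact in which the norm plays no role: convexity, line segments, lines, translates, and convex hulls are all affine notions. So I would first fix an auxiliary affine coordinate system in which the translation vector $v$ points in the positive $x$-direction, say $v=(t,0)$ with $t>0$, so that ``parallel to the line of translation'' becomes ``horizontal''. Writing the left and right boundaries of $C$ as graphs $x=a(y)$ and $x=b(y)$ over the vertical extent $[y_{\min},y_{\max}]$ of $C$, convexity of $C$ gives that $a$ is convex and $b$ is concave, whence the horizontal width $w(y)=b(y)-a(y)$ is concave. The boundary $\gamma'$ of $C'=C+v$ is then the pair of graphs $a(y)+t$ and $b(y)+t$.

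Second, I would establish the structural part (the first two paragraphs of the statement) from this description. On each horizontal line the chord of $C$ is $[a(y),b(y)]$ and that of $C'$ is $[a(y)+t,b(y)+t]$, so $C\cap C'$ meets the line in $[a(y)+t,b(y)]$, which is nonempty exactly when $w(y)\ge t$; since $w$ is concave, $\{y:w(y)\ge t\}$ is an interval $[y_1,y_2]$, and the two horizontal supporting lines of $C\cap C'$ are $y=y_1$ and $y=y_2$. A point lies in $\gamma\cap\gamma'$ precisely when it is a right-boundary point $(b(y),y)$ with $w(y)=t$ (or lies on a flat horizontal top or bottom edge), so $\gamma\cap\gamma'$ is governed by the equality set $\{w=t\}$. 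The key observation is that a concave $w$ which is $\ge t$ on $[y_1,y_2]$ can equal $t$ at an interior point only if it is constant there, and $b-a\equiv t$ with $a$ convex, $b$ concave forces $a$ and $b$ to be affine; hence each component of $\gamma\cap\gamma'$ is automatically a (possibly degenerate or empty) straight segment, located at the top ($y_2$) and bottom ($y_1$) tips of the lens $C\cap C'$. This simultaneously shows that these supporting lines meet $C\cap C'$ exactly in the two components $A_1,A_2$.

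Third, and this is the real content, I would prove the two containments by a translate-and-sandwich argument. With $p_i$ chosen in $A_i$ (so $p_i=(b(y_i),y_i)$ in the generic case), one checks that $c_i=p_i-v=(a(y_i),y_i)$ are left-boundary points of $C$ and $c_i'=p_i+v$ are right-boundary points of $C'$, lying on opposite sides of the chord $\langle p_1,p_2\rangle$; thus $\gamma_2$ is the right sub-arc $\{(b(y),y):y\in[y_1,y_2]\}$ of $\gamma$ and $\gamma_1'$ is the arc of $\gamma'$ on the $c_i'$-side. Since $\gamma'=\gamma+v$, the translate $\gamma_2+v=\{(b(y)+t,y)\}$ lies inside $\gamma_1'$, and the endpoints $p_1,p_2$ lie in $\gamma_1'$ as well, so $\overline{p_1p_2}\subseteq\operatorname{conv}(\gamma_1')$. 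For each height $y\in[y_1,y_2]$ the point $(b(y),y)$ of $\gamma_2$ lies between the point where the horizontal line at height $y$ meets $\overline{p_1p_2}$ and the point $(b(y)+t,y)\in\gamma_1'$; as both of these lie in $\operatorname{conv}(\gamma_1')$, convexity gives $(b(y),y)\in\operatorname{conv}(\gamma_1')$, that is, $\gamma_2\subseteq\operatorname{conv}(\gamma_1')$. The inclusion $\gamma_2'\subseteq\operatorname{conv}(\gamma_1)$ follows by the mirror-image argument applied to the left boundaries and the translate $-v$.

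The step I expect to be the main obstacle is not the sandwiching inequality, which is clean once the reduction is in place, but the bookkeeping in the structural part: showing that $\gamma\cap\gamma'$ really breaks into at most two connected pieces and that each is a genuine segment (never a curved coincidence arc), together with the correct treatment of the degenerate configurations --- flat horizontal edges at the top or bottom of $C$, tips collapsing to single points, and the possibility that one of $A_1,A_2$ is empty. The concavity of $w$ is exactly the tool that excludes curved coincidence arcs, so the whole argument hinges on exploiting it uniformly across all these cases.
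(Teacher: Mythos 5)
The paper itself contains no proof of this lemma: it is imported verbatim as a result of Gr\"unbaum \cite{Grue1} and Banasiak \cite{Ban} (with a pointer to \cite[\S 3.3]{MSW}), so there is no internal argument to compare yours against; I can only assess your proposal on its own merits.

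Your route is sound and, in the main case, complete. The reduction to graphs ($a$ convex, $b$ concave, $w=b-a$ concave) is correct; the superlevel-set argument for $w$ correctly forces $\gamma\cap\gamma'$ to consist of at most two segments sitting on the two horizontal supporting lines of $C\cap C'$ (with the $w\equiv t$ case giving a single segment, consistent with one component being empty); and the horizontal sandwich $\ell(y)\le b(y)\le b(y)+t$, with $\ell(y)\le b(y)$ coming from concavity of $b$ and $\gamma_2+v$ landing in $\gamma_1'$, does prove $\gamma_2\subseteq\operatorname{conv}(\gamma_1')$. Two caveats. First, the degenerate cases you flag (a flat top or bottom edge, so that $A_i$ is a nondegenerate segment and $p_i$ an arbitrary point of it) are owed, not optional, since your containment argument as written assumes $p_i=(b(y_i),y_i)$; reassuringly, the identical sandwich works there too (a flat-edge point $q\in\gamma_2$ lies horizontally between $p_2$ and $q+v\in\gamma_1'$), so this is bookkeeping rather than a gap. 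Second, a point you get right only implicitly and should make explicit: $\gamma_2$ must be defined as the \emph{arc} of $\gamma$ from $p_1$ to $p_2$ contained in the closed half-plane opposite to $c_1,c_2$ --- exactly your graph piece $\{(b(y),y):y\in[y_1,y_2]\}$ --- and not literally as everything of $\gamma$ lying in that closed half-plane. The distinction matters: if $\gamma$ contains a segment of the line $\langle p_1,p_2\rangle$ extending beyond $\overline{p_1p_2}$ (take $C$ the triangle with vertices $(1,-1)$, $(1,1)$, $(-1,0)$ and $v=(1,0)$, so the chord is $x=1$ and contains the whole right edge of $C$), then the half-plane reading of $\gamma_2$ contains points such as $(1,1)$ which are \emph{not} in $\operatorname{conv}(\gamma_1')$, i.e.\ the literal statement fails, whereas your arc identification makes it true. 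So the concavity bookkeeping you single out as the crux must also police this boundary configuration, which is relevant precisely in the non-strictly-convex planes for which Section \ref{ball hull structure in a non strictly} invokes the lemma.
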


Let $\mathbb{M}^2$ be a normed plane which is not strictly convex. Let $o$ be the origin of the plane, and $x$, $p$ and $q$ be points  such that $p,q\in S(o,\lambda)\cap S(x,\lambda)$. Let us consider both minimal arcs of $S(o,\lambda)$ and $S(x,\lambda)$ meeting $p$ and $q$. There are different situations:

Case 1. \textsl{The centers $o$ and $x$ belong to different half-planes bounded by the line $\langle p, q \rangle$.} By Lemma \ref{twocircles}, the minimal arc meeting $p$ and $q$ bounded by $S(x,\lambda)$ is contained in $B(o,\lambda)$.

Case 2. \textsl{The centers $o$ and $x$ belong to the same half-plane bounded by the line $\langle p, q \rangle$.}
The points $x$, $o$ and $p+q$ belong to $S(p,\lambda)\cap S(q,\lambda)$. By Lemma \ref{twocircles}, $S(p,\lambda)\cap S(q,\lambda)$ is the  union of two segments, each of which may degenerate to a point or to the empty set.

Subcase 2.1. \textsl{$S(p,\lambda)\cap S(q,\lambda)$ are two different non-empty, connected components.} The two lines  parallel to $\vec{pq}$ and supporting $S(p,\lambda)\cap S(q,\lambda)$ intersect $S(p,\lambda)\cap S(q,\lambda)$  exactly in both connected components. Due to the fact that $x$ and $o$ are in the same half-plane defined by $\langle p, q \rangle$, they belong to the same connected component of the intersection, and the vector $\vec{ox}$ is parallel to $\vec{pq}.$ Without loss of generality let us assume that $\vec{ox}=\alpha \vec{qp}$ with $\alpha>0.$

The point $x+\vec{op}$ belongs to the line $\langle p,q\rangle$, and it is at distance $\lambda$ from $x$. Therefore, the segment meeting $x+\vec{op}$, $p$ and $q$ belongs to $S(x,\lambda)$, and the segment $\overline{pq}$ is the minimal arc meeting $p$ and $q$ from  the circle $S(x,\lambda).$ By convexity, this segment belongs to the disc $B(o,\lambda).$

Subcase 2.2. \textsl{$S(p,\lambda)\cap S(q,\lambda)$ is only one connected  component.} The points $x$, $o$ and $p+q$ belong to this connected component, and they are aligned. Let us assume that $o$ is situated in this segment-component between $x$ and $p+q$. The line $\langle p, q \rangle$ separates the points $x$ and $o$ from $p+q$. By Lemma \ref{twocircles}, the minimal arc meeting $p$ and $q$ defined by $S(x,\lambda)$ is contained in the disc $B(o,\lambda)$. Let $a$ be the extreme point of the segment $S(p,\lambda)\cap S(q,\lambda)$ such that $x$ is situated between $a$ and $o$. By the same previous argument, the minimal arc meeting $p$ and $q$ from $S(a,\lambda)$ is contained in  every disc with center belonging to the part
of $S(p, \lambda)\cap S(q, \lambda)$ which is situated in the half-plane defined by the
line $\langle p,q\rangle$ and containing $o$.

With the above situation and the results included in \cite{Ma-Ma-Sp2} (see Lemma 3 and Lemma 4 there), we can prove the following.

\begin{prop}\label{3.0extended}
Let $\mathbb{M}^2$ be a normed plane. For every pair of points $p$ and $q$ whose distance is less or equal to $2\lambda$, there exist two minimal circular arcs meeting them (eventually only one, if they degenerate to the same segment) which belong to every disc of radius $\lambda$ containing $p$ and $q$. These two arcs (if they are really two) are situated in different half planes  bounded by the line $\overline{pq}.$ The centers of the discs defining these two minimal arcs are situated in the extreme points of the connected components $S(p,\lambda)\cap S(q,\lambda)$.
\end{prop}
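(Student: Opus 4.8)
The plan is to compute $\mathrm{bh}(\{p,q\},\lambda)=\bigcap_{x\in D}B(x,\lambda)$ explicitly, where $D:=B(p,\lambda)\cap B(q,\lambda)$, and then read off its boundary. First I would record the two relevant loci of centers. By symmetry of the norm, a disc $B(x,\lambda)$ contains $\{p,q\}$ exactly when $x\in D$, and its bounding circle passes through both points exactly when $x\in S(p,\lambda)\cap S(q,\lambda)$. Since $\|p-q\|\leq 2\lambda$, the midpoint $\tfrac12(p+q)$ lies in $D$, so $D$ is a nonempty convex body and $\mathrm{bh}(\{p,q\},\lambda)$ is well defined. Applying Lemma \ref{twocircles} with $C=B(p,\lambda)$, translation vector $v=q-p$, and $C'=B(q,\lambda)$ shows that $S(p,\lambda)\cap S(q,\lambda)=A_1\cup A_2$ is a union of two segments, each possibly degenerating to a point or the empty set, and that the two lines parallel to $\langle p,q\rangle$ supporting $D$ meet it precisely in $A_1$ and $A_2$. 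In particular each $A_i$ lies on a line parallel to $\langle p,q\rangle$, so its extreme points are well defined; these endpoints will furnish the candidate centers, matching the last clause of the statement.

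Next I would determine which endpoints serve as the centers of the two distinguished arcs, proving along the way that the associated minimal arcs lie in different half-planes bounded by $\langle p,q\rangle$. Because $A_1$ and $A_2$ are the contact sets of supporting lines parallel to $\langle p,q\rangle$, one lies (weakly) above $\langle p,q\rangle$ and the other (weakly) below; a center above the line produces a minimal arc below it, and conversely, which gives the two-half-planes assertion as well as the degenerate coincidence with $\overline{pq}$ when both arcs collapse. To select the correct endpoint of each component I would run through the trichotomy established just before the statement: \emph{Case 1}, where the comparison center and the chosen center lie on opposite sides of $\langle p,q\rangle$, so containment of the arc follows at once from Lemma \ref{twocircles}; \emph{Subcase 2.1}, where the degenerate component forces the minimal arc to be the segment $\overline{pq}$, contained in every $B(x,\lambda)$ by convexity; and \emph{Subcase 2.2}, where the \emph{outermost} endpoint $a$ of a component is exactly the center whose minimal arc is swallowed by every disc centered on the corresponding part of that component. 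Choosing $a_1,a_2$ to be these outermost endpoints (one in each half-plane) then defines the two arcs.

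The substantive step, and the one I expect to be the main obstacle, is to upgrade these pointwise comparisons into the global claim that the two chosen arcs lie in \emph{every} disc of radius $\lambda$ containing $p$ and $q$. The difficulty is twofold. First, unlike the strictly convex situation of Lemma \ref{4.0}, when $\mathbb{M}^2$ is not strictly convex and $\|p-q\|>\lambda$ a disc through $p,q$ need not contain all minimal arcs meeting them, so one must genuinely isolate the extremal arcs rather than argue uniformly. Second, the family of admissible centers is the two-dimensional region $D$, whereas the case analysis only treats centers on the one-dimensional locus $S(p,\lambda)\cap S(q,\lambda)$. I would bridge this by reducing to the extremal discs: writing $a_1,a_2$ for the selected endpoints of $A_1,A_2$, I claim $\mathrm{bh}(\{p,q\},\lambda)=B(a_1,\lambda)\cap B(a_2,\lambda)$. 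The inclusion $\subseteq$ is immediate, since $a_1,a_2\in D$; for $\supseteq$ one uses the convexity of $D$ together with Lemmas 3 and 4 of \cite{Ma-Ma-Sp2} to see that it suffices to test boundary circles $S(x,\lambda)$ with $x\in S(p,\lambda)\cap S(q,\lambda)$, and for these the Case 1/2.1/2.2 analysis, applied with the endpoints $a_1,a_2$ chosen as above, yields the containment. Finally I would observe that when the two selected endpoints coincide in the translation direction the two arcs merge into the single segment $\overline{pq}$, which disposes of the degenerate clause of the statement.
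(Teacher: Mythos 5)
Your proposal takes essentially the same route as the paper: the paper's own proof of Proposition \ref{3.0extended} consists precisely of the preceding case analysis (Case 1, Subcases 2.1 and 2.2) built on the Gr\"{u}nbaum--Banasiak result (Lemma \ref{twocircles}), together with an appeal to Lemmas 3 and 4 of \cite{Ma-Ma-Sp2} to pass from circles through both $p$ and $q$ to arbitrary discs of radius $\lambda$ containing them---exactly the ingredients you use. Your intermediate claim $\mathrm{bh}(\{p,q\},\lambda)=B(a_1,\lambda)\cap B(a_2,\lambda)$ and the explicit choice of the outermost endpoints only make explicit (and correctly flag as the delicate step) what the paper leaves at the level of that citation, so the two arguments coincide in substance.
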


Equipped with Proposition \ref{3.0extended}, it seems possible to obtain the following results (proved in \cite{Ma-Ma-Sp} and  \cite{Ma-Ma-Sp2} whether either $\lambda\geq \mathrm{diam}(K)$ or $\mathbb{M}^2$ is strictly convex), as well as extend some of the algorithms presented in this paper  to a general normed plane.

\begin{con}\label{con1}
Let $K=\{p_1,p_2,\dots,p_n\}$ be a finite set in a  normed plane $\mathbb{M}^2$, and $\lambda\geq \lambda_K$. Then
$$\operatorname{bh}(K)=\bigcap_{i=1}^k B(x_i,\lambda),$$
where $B(x_i,\lambda)$, $i=1,2,...,k$, are balls which contain $K$, and their spheres contain some minimal arcs meeting points of $K$.
\end{con}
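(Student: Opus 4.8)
The plan is to prove the two inclusions separately, after reducing the statement to a question about $\operatorname{bi}(K,\lambda)$ via the elementary duality $z\in B(x,\lambda)\Leftrightarrow x\in B(z,\lambda)$. Write $\mathcal{S}$ for the family of centers $x_i$ described in the statement. By Proposition \ref{3.0extended} each such $x_i$ is an extreme point of some $S(p_a,\lambda)\cap S(p_b,\lambda)$ and satisfies $K\subset B(x_i,\lambda)$, so there are at most $O(n^2)$ of them and the family is finite. The inclusion $\operatorname{bh}(K,\lambda)\subseteq\bigcap_{i=1}^k B(x_i,\lambda)$ is then immediate: every $B(x_i,\lambda)$ contains $K$ and hence occurs among the balls defining $\operatorname{bh}(K,\lambda)$.

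For the reverse inclusion I would use that $K\subset B(x,\lambda)$ is equivalent to $x\in\operatorname{bi}(K,\lambda)$, so that $z\in\operatorname{bh}(K,\lambda)$ if and only if $\operatorname{bi}(K,\lambda)\subseteq B(z,\lambda)$, and likewise $z\in\bigcap_i B(x_i,\lambda)$ if and only if $\{x_1,\dots,x_k\}\subseteq B(z,\lambda)$. Thus it suffices to show that a disc $B(z,\lambda)$ containing all the centers $x_i$ already contains the whole set $\operatorname{bi}(K,\lambda)$. First I would check that every vertex of $\operatorname{bi}(K,\lambda)$ belongs to $\mathcal{S}$: a vertex $v$ lies on the two circles $S(p_a,\lambda),S(p_b,\lambda)$ whose arcs meet there, and since at a genuine corner these circles cannot coincide locally, $v$ is an endpoint of a connected component of $S(p_a,\lambda)\cap S(p_b,\lambda)$, i.e.\ an extreme point; Proposition \ref{3.0extended} then guarantees that $S(v,\lambda)$ carries a minimal arc meeting $p_a$ and $p_b$, so $v\in\mathcal{S}$.

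Next I would exploit the boundary structure of $\operatorname{bi}(K,\lambda)$. Each arc of $\partial\operatorname{bi}(K,\lambda)$ lies on some $S(p_i,\lambda)$ with endpoints two consecutive vertices $v,v'$, and because $\operatorname{bi}(K,\lambda)\subseteq B(p_i,\lambda)$ the arc lies on the side of $\langle v,v'\rangle$ opposite to $p_i$; hence it is a minimal arc of radius $\lambda$ meeting $v$ and $v'$. If $B(z,\lambda)$ contains all vertices, it contains in particular $v$ and $v'$, and by Proposition \ref{3.0extended} (together with Lemma \ref{4.0} in the strictly convex case) it contains the minimal arcs meeting $v$ and $v'$, so the whole arc lies in $B(z,\lambda)$. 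Running over all boundary arcs gives $\partial\operatorname{bi}(K,\lambda)\subseteq B(z,\lambda)$, and since $B(z,\lambda)$ is convex and the compact convex set $\operatorname{bi}(K,\lambda)$ equals $\operatorname{conv}(\partial\operatorname{bi}(K,\lambda))$, this yields $\operatorname{bi}(K,\lambda)\subseteq B(z,\lambda)$, completing the reverse inclusion.

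The main obstacle is the last containment step in a plane that is \emph{not} strictly convex: Proposition \ref{3.0extended} only asserts that the two \emph{innermost} minimal arcs meeting $v$ and $v'$ lie in every disc of radius $\lambda$ through $v$ and $v'$, whereas the boundary arc of $\operatorname{bi}(K,\lambda)$ is centered at $p_i$, which need not be an extreme point of $S(v,\lambda)\cap S(v',\lambda)$. Here one must invoke the case analysis preceding Proposition \ref{3.0extended}: when $p_i$ is an extreme point the arc is one of the two special arcs and the conclusion is immediate, while when $p_i$ lies in the interior of a segment-component the corresponding minimal arc degenerates to the chord $\overline{vv'}$ (as in Subcase 2.1), which lies in $B(z,\lambda)$ by convexity. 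Making this dichotomy precise --- equivalently, verifying directly via Lemma \ref{twocircles} applied to the congruent circles $S(p_i,\lambda)$ and $S(z,\lambda)$ that the far arc between two interior points stays inside $B(z,\lambda)$ --- is the delicate point, and is exactly where strict convexity was used in the proof of Theorem \ref{theo1}.
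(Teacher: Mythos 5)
First, an important point of reference: this statement is not proved anywhere in the paper --- it is Conjecture \ref{con1}, which the authors explicitly leave open (``it seems possible to obtain the following results\dots''). So there is no proof of the paper to compare yours against; your attempt would, if correct, settle the conjecture. Your opening moves are sound and worthwhile: the duality $z\in B(x,\lambda)\Leftrightarrow x\in B(z,\lambda)$ correctly gives that $z\in\operatorname{bh}(K,\lambda)$ iff $\operatorname{bi}(K,\lambda)\subseteq B(z,\lambda)$, and that $z\in\bigcap_i B(x_i,\lambda)$ iff all the centers $x_i$ lie in $B(z,\lambda)$; the inclusion $\operatorname{bh}(K,\lambda)\subseteq\bigcap_i B(x_i,\lambda)$ is indeed trivial. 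But the core step --- that a disc of radius $\lambda$ containing the distinguished centers must contain every boundary arc of $\operatorname{bi}(K,\lambda)$ --- is exactly where the argument breaks, and it breaks on the same obstruction that makes the statement a conjecture rather than a theorem.

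Two concrete failures. (a) The inference ``because $\operatorname{bi}(K,\lambda)\subseteq B(p_i,\lambda)$ the arc lies on the side of $\langle v,v'\rangle$ opposite to $p_i$; hence it is a minimal arc'' is a non sequitur, and it is false for arcs of $\partial\operatorname{bi}$ in non-strictly convex planes: in the maximum norm with $K=\{(0,0),(-0.1,0)\}$ and $\lambda=1$ one has $\operatorname{bi}(K,1)=[-1,0.9]\times[-1,1]$, and the arc of $\partial\operatorname{bi}(K,1)$ lying on $S((0,0),1)$ runs from $(0.9,1)$ through $(-1,1)$ and $(-1,-1)$ to $(0.9,-1)$, i.e.\ it lies on the \emph{same} side of the line $x=0.9$ as its center $(0,0)$. (One can try to repair this by cutting $\partial\operatorname{bi}$ at \emph{all} endpoints of all maximal arcs, but then the containment claim must be re-proved for the refined sub-arcs, which you do not do.) (b) More fatally, the dichotomy you offer to close your acknowledged gap is wrong. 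The assertion ``when $p_i$ lies in the interior of a segment-component the corresponding minimal arc degenerates to the chord $\overline{vv'}$'' rests on Subcase 2.1, where the components of $S(v,\lambda)\cap S(v',\lambda)$ are parallel to $\overrightarrow{vv'}$; Lemma \ref{twocircles} guarantees this parallelism only when there are \emph{two} non-empty components. With a single component it fails: in the maximum norm take $v=(0,0)$, $v'=(2,1)$, $\lambda=1$; then $S(v,1)\cap S(v',1)=\{1\}\times[0,1]$, a single vertical segment. The center $c=(1,0.4)$ lies in its relative interior, and the minimal arc of $S(c,1)$ meeting $v,v'$ passes through $(0,1.4)$, hence is \emph{not} contained in $B((1,0),1)$, although $B((1,0),1)$ contains both $v$ and $v'$. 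So a disc of radius $\lambda$ through two points need not contain all minimal arcs of radius $\lambda$ joining them --- this is precisely the failure of Lemma \ref{4.0}(2) that the paper points out at the beginning of Section \ref{ball hull structure in a non strictly} --- and Proposition \ref{3.0extended} rescues only the two ``special'' arcs centered at extreme points of the components. Your proof therefore needs, and does not supply, an argument that every arc of $\partial\operatorname{bi}(K,\lambda)$ between consecutive vertices is such a special arc or a chord; that is the open content of the conjecture. A smaller error of the same kind: Proposition \ref{3.0extended} does not say that every admissible center is an extreme point of some $S(p_a,\lambda)\cap S(p_b,\lambda)$; in the first example above every point of $[-1,0.9]\times\{\pm 1\}$ is the center of a ball containing $K$ whose sphere contains a (degenerate) minimal arc meeting $K$, so your family $\mathcal{S}$ can be a continuum and the $O(n^2)$ finiteness claim fails as well.
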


\begin{con}\label{con2}
 Let $K=\{p_1,p_2,\dots,p_n\}$ be a finite set in  a normed plane  $\mathbb{M}^2$ and $\lambda\geq \lambda_K$. Then every arc of $\mathrm{bi}(K,\lambda)$ is generated by a vertex of $\mathrm{bh}(K,\lambda)$, and every vertex of $\mathrm{bi}(K,\lambda)$ is the center of an arc belonging to the boundary of $\mathrm{bh}(K,\lambda).$
\end{con}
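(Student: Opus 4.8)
The plan is to obtain this statement as a consequence of the boundary description of $\mathrm{bh}(K,\lambda)$, in exactly the way Theorem~\ref{bi-bh} is deduced from Theorem~\ref{theo1} in the strictly convex case, the single new ingredient being Proposition~\ref{3.0extended}. That proposition replaces the strict-convexity fact (Lemma~4 in \cite{Ma-Ma-Sp2}) that every $\lambda$-disc containing a pair $p,q$ contains \emph{all} minimal arcs meeting $p,q$. Since this stronger fact fails whenever $\mathrm{diam}(K)>\lambda$ in a plane that is not strictly convex, I would first have to secure Conjecture~\ref{con1}, the general boundary description of $\mathrm{bh}(K,\lambda)$, and only then establish the duality asserted here; the two statements should stand or fall together, just as Theorems~\ref{theo1} and \ref{bi-bh} do.

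First I would prove the direction \emph{``every vertex of $\mathrm{bi}(K,\lambda)$ is the center of an arc of $\partial\mathrm{bh}(K,\lambda)$''}. Let $v$ be a vertex of $\mathrm{bi}(K,\lambda)$; then two consecutive bounding arcs meet at $v$, so $v\in S(x,\lambda)\cap S(y,\lambda)$ for distinct $x,y\in K$, whence $x,y\in S(v,\lambda)$. Because $v\in\mathrm{bi}(K,\lambda)$ we have $\|v-p\|\le\lambda$ for every $p\in K$, that is $K\subseteq B(v,\lambda)$, and therefore $\mathrm{bh}(K,\lambda)\subseteq B(v,\lambda)$. The set of admissible centers of radius-$\lambda$ circles through $x$ and $y$ is precisely $S(x,\lambda)\cap S(y,\lambda)$, and since $v$ is a crossing point of the two circle boundaries it is an extreme point of a connected component of this intersection. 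Proposition~\ref{3.0extended} then guarantees that the minimal arc meeting $x$ and $y$ carried by $S(v,\lambda)$ is one of the two distinguished arcs contained in every $\lambda$-disc containing $x,y$, hence contained in all discs defining $\mathrm{bh}(K,\lambda)$. Invoking Conjecture~\ref{con1} I would argue that this arc is not cut off by any further bounding disc and so appears on $\partial\mathrm{bh}(K,\lambda)$, making $v$ the center of a boundary arc.

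Next comes the dual direction, \emph{``every arc of $\mathrm{bi}(K,\lambda)$ is generated by a vertex of $\mathrm{bh}(K,\lambda)$''}. An arc of $\partial\mathrm{bi}(K,\lambda)$ lies on some $S(x,\lambda)$ with $x\in K$, and its endpoints $v_1,v_2$ are vertices of $\mathrm{bi}(K,\lambda)$; from $v_1,v_2\in S(x,\lambda)$ we get $x\in S(v_1,\lambda)\cap S(v_2,\lambda)$. By the direction just proved, $v_1$ and $v_2$ are centers of boundary arcs of $\mathrm{bh}(K,\lambda)$, and $x$ is the common point at which these two arcs meet. Since the two arcs reach $x$ along distinct circles $S(v_1,\lambda)$ and $S(v_2,\lambda)$, the point $x$ is a corner of $\partial\mathrm{bh}(K,\lambda)$, i.e. a vertex, and it is this vertex that generates the given arc. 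The extreme-point clause of Proposition~\ref{3.0extended} is again what pins down which center is relevant and keeps the correspondence single-valued.

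The hard part will be the degeneracies that are invisible in the strictly convex setting. When a component of $S(p,\lambda)\cap S(q,\lambda)$ is a genuine segment rather than a single point, the center of a radius-$\lambda$ circle through $p$ and $q$ is no longer unique, a minimal arc may collapse to the chord $\overline{pq}$, and both the words ``arc'' and ``vertex'' must be interpreted so as to absorb these flat pieces. Verifying that the two minimal arcs singled out by Proposition~\ref{3.0extended} are exactly the ones realized on $\partial\mathrm{bh}(K,\lambda)$, and that the pairing of $\mathrm{bi}$-vertices with $\mathrm{bh}$-arc-centers survives these collapses, is the delicate step and the reason the assertion is posed as a conjecture rather than a theorem. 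I expect that once Conjecture~\ref{con1} is established with full control over the degenerate configurations, the duality above follows with only routine additional bookkeeping.
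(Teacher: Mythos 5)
The statement you are trying to prove is posed in the paper as Conjecture~\ref{con2}: the paper itself offers \emph{no proof}, only the remark that, equipped with Proposition~\ref{3.0extended}, it ``seems possible'' to extend the strictly convex result (Theorem~\ref{bi-bh}, proved in \cite{Ma-Ma-Sp2}) to general normed planes. Your proposal follows exactly that suggested route --- deduce the duality between $\mathrm{bi}(K,\lambda)$ and $\mathrm{bh}(K,\lambda)$ from a general boundary description of the ball hull, with Proposition~\ref{3.0extended} standing in for Lemma~4 of \cite{Ma-Ma-Sp2} --- so in spirit you are aligned with the paper's intention. But what you have written is a plan, not a proof, and you concede as much: it is conditional on Conjecture~\ref{con1}, which the paper also leaves open, and it defers precisely the points where the failure of strict convexity causes trouble.

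Two concrete gaps. First, in the direction ``every vertex of $\mathrm{bi}$ is the center of a boundary arc of $\mathrm{bh}$'', you assert that a vertex $v$ of $\mathrm{bi}(K,\lambda)$, being a crossing point of $S(x,\lambda)$ and $S(y,\lambda)$, ``is an extreme point of a connected component'' of $S(x,\lambda)\cap S(y,\lambda)$. That is exactly what is not automatic without strict convexity: the components of $S(x,\lambda)\cap S(y,\lambda)$ can be nondegenerate segments (in the maximum norm, two unit circles whose centers are at horizontal distance one meet in two horizontal segments), the boundary of $\mathrm{bi}(K,\lambda)$ can contain flat pieces common to both circles, and identifying ``vertex'' with ``extreme point of a component'' requires a definition and an argument, not an appeal to the picture. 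Second, in the dual direction you conclude that $x$ ``is a corner of $\partial\mathrm{bh}(K,\lambda)$'' because two boundary arcs of $\mathrm{bh}$ centered at $v_1$ and $v_2$ pass through it; but you have not shown that those two arcs actually appear on $\partial\mathrm{bh}(K,\lambda)$ adjacent to $x$ --- rather than being cut off by a third bounding disc, or meeting along a common segment through $x$ --- and that is the content of the claim. In the strictly convex case both points are carried by the proof in \cite{Ma-Ma-Sp2}; in the general case they are the whole difficulty, and what you call ``routine additional bookkeeping'' is precisely the open part of Conjectures~\ref{con1} and~\ref{con2}. So your proposal is a reasonable research outline consistent with the paper's suggestion, but it does not constitute a proof, and the paper contains none to compare it with.
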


\bigskip

\vspace{1cm}
\begin{tabular}{l}
Pedro Mart\'{\i}n\\
Departamento de Matem\'{a}ticas,\\
Universidad de Extremadura,\\
06006 Badajoz, Spain\vspace{0.1cm}\\
E-mail: pjimenez@unex.es
\end{tabular}\vspace{0.3cm}

\begin{tabular}{l}
Horst Martini\\
Fakult\"at f\"ur Mathematik, TU Chemnitz\\
D-09107 Chemnitz, Germany\vspace{0.1cm}\\
E-mail: horst.martini@mathematik.tu-chemnitz.de
\end{tabular}\vspace{0.3cm}

\end{document}